\newtheorem{theo}{Theorem}
\newtheorem{prop}{Proposition}
\newtheorem{lem}{Lemma}
\newtheorem{definition}{Definition}
\newtheorem{hypo}{Cross Model}
\newcommand{\set}[1]{\left\{#1\right\}}
\newcommand{\eps}{\varepsilon}
\newcommand{\lf}{\lfloor}
\newcommand{\rf}{\rfloor}
\newcommand{\Tore}{K,d}
\newcommand{\Bande}{\mathbb{Z}_K}
\newcommand{\Zero}{\mathbf{0}}
\renewcommand{\P}{\mathbf{P}}
\newcommand{\E}{\mathbf{E}}
\newcommand{\N}{\mathbb{N}}
\newcommand{\Z}{\mathbb{Z}}
\newcommand{\un}{{\mathchoice {\rm 1\mskip-4mu l} {\rm 1\mskip-4mu l} {\rm 1\mskip-4.5mu l} {\rm 1\mskip-5mu l}}}
\newcommand{\defeq}{\ensuremath{\overset{\hbox{\tiny{def}}}{=}}}
\newcommand{\infi}{\un_{\{\Zero\leftrightarrow \infty\}}}
\newcommand{\infin}{\un_{\{\Zero\leftrightarrow (n,0)\leftrightarrow \infty\}}}
\newcommand{\IntG}{[\![}
\newcommand{\IntD}{]\!]}
\title{Distances in the highly supercritical percolation cluster}
\author{Anne-Laure Basdevant, Nathana\"{e}l Enriquez, Lucas Gerin}
\begin{document}
\maketitle
\begin{abstract}

On the supercritical percolation cluster with parameter $p$, the distances between two distant points of the axis are asymptotically increased by a factor $1+\frac{1-p}{2}+o(1-p)$ with respect to the usual distance.
The proof is based on an apparently new connection with the TASEP (totally asymmetric simple exclusion process).

\end{abstract}

\noindent{\bf Keywords:} first-passage percolation, supercritical percolation, TASEP.

\section{Introduction}

First passage percolation is a model introduced in the 60's by Hammersley and Welsh \cite{HW} which asks the question of the minimal distance $D(x)$ between the origin $\Zero$ and a distant point $x$ of $\Z^2$, when edges have i.i.d. positive finite lengths.
One can prove by subadditivity arguments that in every direction such distances grow linearly: for each $y\in \Z^2\backslash\{0\}$,
$D(ny)/n$ converges almost surely to a constant $\mu(y)$. The particular value of $\mu((1,0))=: \mu$ is called the time constant. It is unknown except in the trivial case of deterministic edge lengths. We refer the reader to \cite{Kes} for an introduction on first passage percolation.

In this article we study the extreme case where edges have lengths $1$ with probability $p\in(0,1)$, $+\infty$ with probability $1-p$. Then, the distance $D(x)$ coincides with the distance between the origin and $x$ in the graph induced by bond percolation on $\Z^2$: each edge of $\Z^2$ is open with probability $p$ and closed with probability $1-p$. When $p>1/2$, it is known (see \cite{Grimmett} Chap.1 for an introduction to bond percolation) that there exists almost surely a unique infinite connected component of open edges.
We write $x\leftrightarrow y$ if $x,y$ belong to the same connected component, and  $x\leftrightarrow \infty$ if $x$ is in the infinite cluster.

G\"{a}rtner and Molchanov (\cite{Gartner} Lemma 2.8) were the first to rigorously prove that if $\Zero$ and $x$ belong to the infinite component, $D(x)$ is of order $x$.
Garet and Marchand (\cite{GM}, Th.3.2) improved this result and showed that, even if the subadditivity argument fails, the limit still holds: for each $y\in \Z^2\backslash\{0\}$, there exists a constant $\mu(y)$ such that, on the event $\set{\Zero\leftrightarrow \infty}$, we have a.s.
\begin{equation*}\lim_{\substack{n \to \infty \\\Zero\leftrightarrow (n,0)}} \frac{D(ny)}{n}=\mu(y).\end{equation*}

The aim of the present paper is to give an asymptotics of the time constant when $p$ is close to one.
One clearly has $D(n,0)\geq n$. On the other side, among the $n$ edges of the segment joining $\Zero$ to $(n,0)$, about $n(1-p)$ of them are closed but with high probability the two extremities of each such edge can be joined by a path of length three. This naive approach suggests an upper bound of $1+2(1-p)$ for the time constant. To our knowledge, the best known upper bound comes from
Corollary 6.4 of \cite{GM} and  is equal to $1+(1-p)$.
We obtain in the present paper a sharp asymptotics of the time constant when $p$ goes to one.
\begin{theo}\label{main}
On the event $\set{\Zero\leftrightarrow \infty}$, we have a.s.\footnote{
We use notations introduced in \cite{GM}: the subscript $\Zero\leftrightarrow (n,0)$ means that we only take $n$'s for which $(n,0)$ is in the infinite component. 
More precisely, if $(T_1(n),0)$ stands for the $n$-th point of the half-line $\mathbb{N}\times\set{0}$ belonging to the infinite component, then
 $\lim_{\substack{n \to \infty \\\Zero \leftrightarrow (n,0)}} \frac{D(n,0)}{n}:= \lim_{n \to \infty} \frac{D(T_1(n),0)}{T_1(n)}$.}
\begin{equation*}
\mu_p\defeq\lim_{\substack{n \to \infty \\\Zero \leftrightarrow (n,0)}} \frac{D(n,0)}{n} =1+\frac{1-p}{2}+\mathrm{o}(1-p).
\end{equation*}
\end{theo}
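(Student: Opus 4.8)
Write $q=1-p$; the claim splits into an upper bound $\mu_p\le 1+q/2+\mathrm{o}(q)$ and a matching lower bound, both obtained through the same one–dimensional problem. Since truncating a self‑avoiding path never increases its length, one expects — and, for the lower bound, one must verify — that an optimal path from $\Zero$ to $(n,0)$ may be taken monotone in the first coordinate. Such a path is determined by the heights $(h_x)_{0\le x\le n-1}$ of its consecutive horizontal edges, and its length equals
\begin{equation*}
n+|h_0|+\sum_{x=0}^{n-2}|h_{x+1}-h_x|+|h_{n-1}|,
\end{equation*}
so the excess over $n$ is exactly the total variation of the height profile (the vertical edges it uses). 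A profile is admissible iff at each abscissa $x$ the horizontal edge it uses, together with the vertical edges joining consecutive heights, are open; since distinct rows are independent copies of Bernoulli$(p)$ percolation, for each $x$ the set of \emph{forbidden} heights is obtained by declaring each integer forbidden independently with probability $q$. The theorem is therefore equivalent to the statement that a monotone integer path through an i.i.d.\ field of obstacles of density $q$, which may sit at height $0$ for free except on a density‑$q$ set of abscissae, has minimal total variation $(q/2+\mathrm{o}(q))\,n$ with probability tending to $1$.

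\noindent\textbf{Upper bound and the connection with TASEP.} For $q$ small, an essentially optimal profile is a concatenation of disjoint excursions away from $0$ separated by free stretches at height $0$. An excursion that only reaches height $\pm1$ costs $2$ but must avoid, over its whole width, the closed edges of the adjacent row; a deeper excursion costs more but crosses such closed edges by exactly the same mechanism one level up, so deciding where to start an excursion and how deep to let it go is a self‑similar, multi‑scale optimisation. The new point is that, read from left to right, the turning points of the optimal profile are governed by a totally asymmetric simple exclusion process: the closed edges one must negotiate on a given row, once shifted by an excursion, play the role of particles subject to the exclusion constraint, and the asymptotic density of turning points per closed edge of the axis is read off from the stationary/hydrodynamic behaviour of this TASEP, producing $1/2$ unit of extra length per closed edge, i.e.\ excess $(q/2)(1+\mathrm{o}(1))$ per unit length. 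Concretely I would fix $M=M(q)\to\infty$ slowly, weave through rows $-M,\dots,M$ following this rule, invoke a law of large numbers / ergodic theorem for the associated particle system to obtain an excess of $(\tfrac12+\mathrm{o}_q(1))\,qn$ with probability tending to one, and absorb into $\mathrm{o}(q)$ the rare abscissae (of probability $O(q^2)$) where more than $M$ rows would be needed. Letting $n\to\infty$ first gives $\mu_p\le 1+q/2+\mathrm{o}(q)$.

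\noindent\textbf{Lower bound.} Here one must rule out cleverer, possibly non‑monotone paths. I would first argue that on the event considered an optimal path may be taken monotone and within distance $O(\log n)$ of the axis — straying further, or backtracking, is too expensive — thereby reducing to admissible height profiles, and then prove that \emph{every} such profile has total variation at least $(\tfrac12-\mathrm{o}_q(1))\,qn$. The TASEP comparison is not merely a device for constructing a good path: the exclusion constraint is an obstruction imposed by the obstacle field, so the same particle system yields a genuine lower bound on the number of turning points forced on any profile. Combined with the existence of $\mu_p$ established by Garet and Marchand \cite{GM}, this gives $\mu_p\ge 1+q/2+\mathrm{o}(q)$ and hence the theorem.

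\noindent\textbf{Main difficulty.} The crux is to turn the correspondence ``nested excursions $\leftrightarrow$ TASEP'' into a rigorous two‑sided estimate with the exact constant $1/2$: controlling the multi‑scale recursion uniformly in its depth, showing that truncating at height $M$ costs only $\mathrm{o}(q)$, and interchanging correctly the limits $n\to\infty$ (which defines $\mu_p$) and $q\to0$. By comparison, the percolation inputs — existence and uniqueness of the infinite cluster, that $\Zero\leftrightarrow(n,0)$ for the relevant $n$'s, and local surgery repairing a path near a closed edge — are routine.
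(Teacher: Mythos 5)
Your high-level plan --- localise to a strip, encode the geodesic structure as a particle system of TASEP type, and extract the constant $1/2$ from its stationary behaviour --- is indeed the strategy of the paper. But as written the proposal has genuine gaps at the two places where the actual work happens.

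First, the TASEP correspondence is asserted, not established, and the version you describe is not one that yields the constant. In the paper the particles are not ``closed edges shifted by an excursion'': they are the signs of the discrete vertical derivative of the distance function, $Y_i^j=\bullet$ iff $D(i,j)=D(i,j-1)-1$, and this configuration evolves as a \emph{synchronous} TASEP on $\IntG-K+1,K\IntD$ with jump, entry and exit rates all equal to $\varepsilon$ --- but only after the model is modified so that all vertical edges are open and diagonal edges of length $2$ are added (the ``Cross Model''); without that modification the distance profile is not even a Markov chain in $i$ with these transition rules. The excess length per column is then exactly $2\varepsilon\,\nu_{K,\varepsilon}(\bullet,\circ)$, and the whole theorem reduces to showing $\nu_{K,\varepsilon}(\bullet,\circ)\to 1/4$. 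This is not a soft hydrodynamic fact: the stationary measure of the boundary-driven synchronous TASEP is not a product measure, and the paper must invoke the exact matrix-ansatz formulas of Evans--Rajewsky--Speer and carry out a delicate asymptotic analysis of the ratio $A_\varepsilon(K+1)/A_\varepsilon(K)\to 4$ along $K=\varepsilon^{-\alpha}$. Your phrase ``read off from the stationary/hydrodynamic behaviour of this TASEP, producing $1/2$ unit of extra length per closed edge'' is precisely the step that needs a proof, and nothing in the proposal supplies it.

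Second, the reduction to monotone height profiles with i.i.d.\ forbidden heights is incorrect for genuine percolation: it ignores closed vertical edges entirely, and optimal paths need not be monotone in the first coordinate (monotonicity holds in the paper only because the Cross Model has all vertical and diagonal edges open). Your lower bound, which relies on restricting to such profiles and on an unproved $O(\log n)$ confinement, therefore does not go through as stated. The paper treats the two bounds asymmetrically and more simply: for the lower bound, the Cross Model has strictly more open edges, so its (exactly computable) distance is automatically a lower bound for $D$; for the upper bound, one conditions on a translate of the strip in which every unit square has at most one closed edge, on which event the true distance and the Cross Model distance differ by $O(K)$, and one controls the cost of reaching such a strip. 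Finally, the interchange of the limits $n\to\infty$ and $\varepsilon\to 0$, which you flag but do not resolve, is handled in the paper by subadditivity along the points $T_n(k)$ of the infinite cluster, second-moment bounds on $D(n,0)$, and the explicit choice $n=\varepsilon^{-3/2}$, $K=\varepsilon^{-1/4}$.
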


Our result says that the graph distance and the $L^1$ distance asymptotically differ by a factor $1+(1-p)/2$.
Note that Garet and Marchand (\cite{GM} Cor.6.4) observed that these two distances coincide in all the directions inside a cone containing the axis $\set{y=x}$. The angle of this cone is characterized by the asymptotic speed of oriented percolation of parameter $p$, studied by Durrett \cite{PercoOrientee}.

The key ingredient of the proof relies on a correspondance between the synchronous \emph{totally asymmetric simple exclusion process} (TASEP) on an interval and the graph distance on the percolation cluster inside an infinite strip.

\section{First bounds on $\mu_p$}
We denote by $\P_p$ the product measure on the set of edges of $\Z^2$ of length 1 under which each edge is open independently with probability $p$. Since $p>1/2$, we have $\P_p\{\Zero\leftrightarrow \infty\}>0$, so we can also define $\bar{\P}_p$ the probability $\P_p$ conditioned on the event $\{\Zero\leftrightarrow \infty\}$.
\begin{equation*}
\bar{\P}_p\{A\}=\frac{{\P}_p\{A\cap \{\Zero\leftrightarrow \infty\}\}}{\P_p\{\Zero\leftrightarrow \infty\}}.
\end{equation*}
When no confusion is possible, we will omit the subscript $p$.

The origin is in the infinite cluster, unless there is a path of closed edges in the dual lattice surrounding $\Zero$. In the whole paper, we take $p$ close enough to one so that this occurs with 
high probability (to fix ideas, with probability greater than $1-2(1-p)^4$).

Because of the conditioning, it is not possible to apply directly subadditive arguments to the sequence $D(n,0)$.
To overcome this difficulty, we adapt the ideas of \cite{GM} and consider the sequence of points of the axis which lie in the infinite cluster. This enables us to derive bounds on $\mu_p$.

For $n\ge 1$, let
$(T_n(k),0)$ be the $k$-th intersection of the infinite cluster and the set $\{(in,0), i\in \N\}$.
\begin{prop}\label{Prop:1ereBorne}
We have for all $n\ge 1$,
\begin{equation*}\lim_{k\rightarrow \infty} \E\left(\frac{D(T_1(k),0)}{k}\un_{\{\Zero\leftrightarrow \infty\}}\right) = \mu_p\le \E\left(\frac{D(T_n(1),0)}{n}\infi\right).\end{equation*}
\end{prop}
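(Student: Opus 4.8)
The proposition asserts two things: the limit defining $\mu_p$ exists and equals $\lim_k \E(D(T_1(k),0)/k \cdot \un_{\{\Zero\leftrightarrow\infty\}})$, and this limit is bounded above by $\E(D(T_n(1),0)/n \cdot \infi)$ for every $n \geq 1$. The natural tool is subadditive ergodic theory applied not to $D(n,0)$ directly—which fails because of the conditioning—but to the sequence indexed by the successive hits of the infinite cluster on the axis.

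Let me think about what structure is available...

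**Strategy.** The key observation in Garet–Marchand's approach is that $(T_1(k),0)$, the $k$-th point of $\mathbb{N}\times\{0\}$ in the infinite cluster, defines a renewal-like (actually stationary-increment) structure. Define $a_{j,k} = D((T_1(j),0),(T_1(k),0))$ for $j < k$, the graph distance between the $j$-th and $k$-th such points. By the triangle inequality this is subadditive: $a_{0,k} \leq a_{0,j} + a_{j,k}$. The increments $T_1(j+1) - T_1(j)$ form a stationary ergodic sequence under $\bar\P_p$ (the environment seen from the infinite cluster is ergodic), and likewise the family $(a_{j,j+1})_j$ is stationary ergodic. Kingman's subadditive ergodic theorem then gives that $a_{0,k}/k \to \mu_p'$ a.s. and in $L^1$ under $\bar\P_p$, for some constant $\mu_p'$, equal to $\inf_k \E_{\bar\P_p}(a_{0,k}/k) = \lim_k \E_{\bar\P_p}(a_{0,k})/k$. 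Since $D(T_1(k),0) = a_{0,k} + O(1)$ (the distance from $\Zero$ to the first hit point is a.s. finite), the renormalized quantity $D(T_1(k),0)/k$ has the same limit, and by the definition in the footnote this limit is exactly $\mu_p$.

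**The upper bound.** For the inequality $\mu_p \leq \E(D(T_n(1),0)/n \cdot \infi)$, I would observe that the points $(jn,0)$ for $j \in \N$ that lie in the infinite cluster form a sparser stationary sequence; call them $(T_n(k),0)$. By the triangle inequality $D(T_n(K),0) \leq \sum_{k=1}^{K} D((T_n(k-1),0),(T_n(k),0))$, and each summand is distributed (under the appropriate stationary measure, using translation invariance by $n$ along the axis) like $D(T_n(1),0)$ conditioned on both endpoints being in the cluster — but one must be careful. The cleaner route: the sequence $(T_n(k),0)$ is a subsequence of $(T_1(k),0)$, so $a_{0, \text{(index of }T_n(K)\text{)}} \leq \sum_{k=1}^K D((T_n(k-1),0),(T_n(k),0))$, and dividing by $T_n(K)/n \sim K$ (by the ergodic theorem applied to the gaps, which are integrable) and by $K$, the right side converges a.s. to $\E(D(T_n(1),0)/n \cdot \infi)/\P_p\{\Zero\leftrightarrow\infty\}$ times the appropriate density — here the normalization constants involving $\P_p\{\Zero\leftrightarrow\infty\}$ must be tracked exactly so that the factor $\infi$ (not $\un_{\{\Zero\leftrightarrow\infty\}}$) appears, which is why the statement carries the extra conditioning on $(n,0)$ being connected through $\Zero$. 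The left side converges to $\mu_p$.

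**Main obstacle.** The delicate point is not Kingman's theorem itself but verifying its hypotheses — specifically, integrability of $a_{0,1} = D((T_1(0),0),(T_1(1),0))$ and of the gap $T_1(1)-T_1(0)$ — and then bookkeeping the normalization by $\P_p\{\Zero\leftrightarrow\infty\}$ so that the indicator $\infi$ rather than $\un_{\{\Zero\leftrightarrow\infty\}}$ ends up on the right-hand side. Integrability of the gaps follows from exponential decay of the cluster-size distribution off the infinite cluster (standard for $p > 1/2$, and in fact $p$ near $1$), and integrability of $a_{0,1}$ follows from the same exponential bounds combined with the deterministic fact that two neighboring axis points in the cluster are connected by a path whose length has exponential tails. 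I would cite the relevant facts from \cite{Grimmett} and mirror the argument structure of \cite{GM}, Theorem 3.2, adapting it to the axis direction.
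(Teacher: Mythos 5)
Your proposal follows essentially the same route as the paper: the paper simply invokes Lemma 3.1 of \cite{GM} (a.s.\ and $L^1$ convergence of $D(T_n(k),0)/(nk)$ to a constant $f$ under $\bar{\P}_p$, which is exactly the Kingman-type statement you sketch for the hit points of the infinite cluster on the axis), deduces the left equality from the $L^1$ convergence with $n=1$ together with $\mu_p=\P\{\Zero\leftrightarrow\infty\}\,f$, and gets the upper bound from the subadditivity inequality $nf\le \E\left(D(T_n(1),0)\mid \Zero\leftrightarrow\infty\right)$. One small correction to your bookkeeping worry: in the paper's notation $\infi$ \emph{is} $\un_{\{\Zero\leftrightarrow\infty\}}$ (there is no extra conditioning on $(n,0)$ in this statement, and none is needed since $(T_n(1),0)$ lies in the infinite cluster by definition), so passing from the conditional expectation to the stated right-hand side is just multiplication by $\P\{\Zero\leftrightarrow\infty\}$.
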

\begin{proof} This proposition is mainly a direct consequence of Lemma 3.1 of \cite{GM}. Indeed, this lemma states that, for all $n\ge 1$, there exists a constant $f$ such that
\begin{equation}\label{sousadd}
\lim_{k\rightarrow \infty} \frac{D(T_n(k),0)}{nk}=f\qquad \bar{\P}  \mbox{ a.s. and in } L^1(\bar{\P}).
\end{equation}
Moreover, by subadditivity, we have $nf\le \E\left(D(T_n(1),0)|\;\Zero\leftrightarrow \infty\right)$ and besides
\begin{equation*}\mu_p=\P\{\Zero\leftrightarrow \infty\}f.\end{equation*}
Combining these two facts, we get the upper bound.
For the left equality, we now use the $L^1$ convergence in  (\ref{sousadd}) with $n=1$. This gives
\begin{equation*}\lim_{k\rightarrow \infty}\E\left(\frac{D(T_1(k),0)}{k}\;\Big|\;\Zero\leftrightarrow \infty\right)=f=\frac{\mu_{p}}{\P\{\Zero\leftrightarrow \infty\}}.\end{equation*}
\end{proof}
For $p$ close to 1, the upper bound can be simplified using the following proposition.
\begin{prop}\label{bornemu} For all $\delta>1$, there exists $c_{\delta}>0$ such that for all $n\ge 10$ and $p\in (5/6,1)$, we have
\begin{equation*}
\E(D(T_n(1),0)\infi)\le \E(D(n,0)\infin)+c_{\delta}(1-p)^2n^{\delta}.
\end{equation*}
\end{prop}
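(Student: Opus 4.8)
The plan is to compare $D(T_n(1),0)$ with $D(n,0)$ by controlling the extra cost incurred when $(n,0)$ happens not to lie in the infinite cluster, so that $T_n(1)>n$. First I would write $D(T_n(1),0)\infi = \sum_{k\ge 1} D(kn,0)\,\un_{\{T_n(1)=kn\}}\infi$ and separate the dominant term $k=1$ (on which $T_n(1)=n$, i.e.\ $(n,0)\leftrightarrow\infty$) from the tail $k\ge 2$. On the event $\{T_n(1)=n\}$ one has $D(T_n(1),0)\infi \le D(n,0)\infin$ up to the small discrepancy between $\infi$ and $\infin$, which I would handle by noting that $\{\Zero\leftrightarrow\infty,\ (n,0)\leftrightarrow\infty\}$ differs from $\{\Zero\leftrightarrow(n,0)\leftrightarrow\infty\}$ only on an event of probability $O((1-p)^4)$ (a dual closed circuit must separate $\Zero$ from $(n,0)$), on which $D(n,0)$ can be crudely bounded — one always has $D(n,0)\le C n$ deterministically whenever both endpoints are in the infinite cluster, using the standard ``join through the cluster'' estimate, and in fact $\E(D(n,0)\mid\cdots)$ is linear in $n$. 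Multiplying a linear-in-$n$ quantity by $O((1-p)^4)$ already fits inside the error term $c_\delta(1-p)^2 n^\delta$ for $\delta>1$.

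The heart of the matter is the tail: for $k\ge 2$, the event $\{T_n(1)=kn\}$ forces $(n,0),(2n,0),\dots,((k-1)n,0)$ all to be outside the infinite cluster. Each such point being outside the infinite cluster, while $\Zero$ is inside, requires a closed dual circuit surrounding it, hence a closed dual path of length $\ge 4$; moreover for distinct, $n$-separated points with $n$ large these circuits are essentially independent when their enclosed boxes are disjoint, so $\P\{T_n(1)=kn\}$ should decay like $\big(C(1-p)^4\big)^{k-1}$ for $k$ up to order $\log$ of the box scale, and by the exponential decay of the radius of a closed dual circuit around a fixed point (valid since $p>5/6>p_c$, with a uniform rate) one gets a genuine tail bound $\P\{T_n(1)\ge kn\}\le (c(1-p))^{c'(k-1)}$. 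Combined with the Cauchy–Schwarz / Hölder bound $\E\big(D(kn,0)\un_{\{T_n(1)=kn\}}\infi\big)\le \E\big(D(kn,0)^{1/\alpha}\infi\big)^{\alpha}\,\P\{T_n(1)=kn\}^{1-\alpha}$, using that $D(kn,0)$ has moments growing polynomially in $kn$ on $\{\Zero\leftrightarrow\infty\}$ (again from the standard supercritical chemical-distance estimates, e.g.\ Antal–Pisztora–type bounds), the $k$-th term is bounded by $(kn)^{O(1)}\,(c(1-p))^{c'(k-1)}$. Summing over $k\ge 2$ yields a bound of the form $c\,(1-p)^{c'} n^{O(1)}$; choosing the free exponents so that the polynomial power of $n$ is at most $\delta$ and the power of $(1-p)$ is at least $2$ (possible since $c'$ can be taken large by restricting to $p$ close to $1$, or by iterating the circuit estimate) gives exactly $c_\delta(1-p)^2 n^\delta$.

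In carrying this out the key steps, in order, are: (i) the decomposition $D(T_n(1),0)\infi=\sum_k D(kn,0)\un_{\{T_n(1)=kn\}}\infi$ and the identification of the $k=1$ term with $\E(D(n,0)\infin)$ modulo an $O((1-p)^4)$-probability correction; (ii) a deterministic/linear-mean upper bound on $D(m,0)$ on $\{\Zero\leftrightarrow m\leftrightarrow\infty\}$ and, more generally, polynomial moment bounds for $D(m,0)\infi$; (iii) the exponential-in-$k$ tail bound on $\P\{T_n(1)\ge kn\}$ coming from requiring $k-1$ disjoint closed dual circuits, each of probability $O((1-p)^4)$; (iv) a Hölder estimate marrying (ii) and (iii), and summation over $k$. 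I expect step (iii) to be the main obstacle: one must show not merely that each point is non-connected with probability $O((1-p)^4)$, but that the joint probability of $k-1$ such events truly multiplies (disjointification of the dual circuits, or a BK/van den Berg–Kesten-style inequality) and that the resulting rate, together with the polynomial loss from the moments of $D$, still beats $(1-p)^2 n^\delta$ for every $\delta>1$ uniformly in $n\ge 10$ and $p\in(5/6,1)$ — this is where the precise bookkeeping of constants, and possibly a small amount of extra work to upgrade $(1-p)^4$ per circuit into an arbitrarily high power of $(1-p)$ by nesting circuits, will be needed.
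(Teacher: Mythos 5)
Your overall architecture matches the paper's: decompose over the value of $T_n(1)$, identify the $j=1$ term with $\E(D(n,0)\infin)$, and control the tail terms by Cauchy--Schwarz, combining a polynomial moment bound on $D(jn,0)$ (the paper's Lemma~\ref{roughbound}) with a tail bound on $T_n(1)$ (the paper's Lemma~\ref{Lem:Tn}). Two small remarks first: the $j=1$ term needs no correction at all, since by uniqueness of the infinite cluster $\un_{\{T_n(1)=n\}}\infi=\infin$ almost surely; and your claim that $D(n,0)\le Cn$ \emph{deterministically} when both endpoints lie in the infinite cluster is false --- there is no deterministic linear bound, which is exactly why the paper proves the $L^2$ estimate $\E(D^2(n,0)\infin)\le (C_\delta n^\delta)^2$ by a dual-circuit argument in a thin box (your fallback to polynomial moment bounds is the right substitute).

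The genuine gap is your step (iii), which you correctly flag as the main obstacle but do not resolve. The events $A_i=\{(in,0)\nleftrightarrow\infty\}$ are decreasing, so FKG gives $\P\{\cap_i A_i\}\ge\prod_i\P\{A_i\}$, i.e.\ the wrong direction; a BK/Reimer argument would require disjoint witnessing edge sets, which fails because a single long closed dual circuit can surround several of the points at once; and the naive localization to disjoint boxes of radius $n/2$ around each of the $j-1$ points leaves an additive de-localization error of order $jn(3\eps)^{n}$ that does \emph{not} decay in $j$, so the series $\sum_j j^{\delta}\P\{T_n(1)\ge jn\}^{1/2}$ is not controlled. The paper's Lemma~\ref{Lem:Tn} gets around this by thinning: it retains only the points indexed by multiples of $J=\lf\sqrt{j}\rf$, localizes each disconnecting circuit to a box of radius $nJ/2$ (these boxes are disjoint, hence the localized events are genuinely independent), and the non-localization error becomes $(3(1-p))^{nJ}$, which does decay in $j$. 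The price is a sublinear exponent, $\P\{T_n(1)\ge jn\}\le C(1-p)^{4+\sqrt{j-2}}$, rather than the exponent linear in $j$ that you assert; but after taking square roots this still produces the required prefactor $(1-p)^2$ times a convergent series. Without this (or some equivalent) device, your tail estimate remains an assertion rather than a proof.
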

To prove this proposition, we first  show two lemmas.
\begin{lem}\label{roughbound} For all $\delta>1$, there exists a constant $C_\delta>0$ such that, for $n\ge 1$ and  for $p\in (5/6,1)$, we have
\begin{equation*}
\E(D^2(n,0)\infin)\le (C_{\delta}n^{\delta})^2.
\end{equation*}
\end{lem}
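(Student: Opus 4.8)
The plan is to show that $D(n,0)$ on the event $\{\Zero\leftrightarrow(n,0)\leftrightarrow\infty\}$ has a tail that decays fast enough (essentially stretched-exponentially) to control the second moment by a polynomial of arbitrarily small exponent above $1$. The starting point is the elementary deterministic bound: if every edge on the segment from $\Zero$ to $(n,0)$ were open, we would have $D(n,0)\le n$; in general, each closed edge on that segment can, with high probability, be bypassed by a detour of bounded length through nearby open edges. So one expects $D(n,0)\le n + (\text{number of detours}) \times (\text{length of a detour})$, and the randomness is concentrated in these two factors.

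First I would make the detour argument quantitative at a fixed scale. Partition the segment $\IntG 1,n\IntD\times\{0\}$ into consecutive blocks of some fixed length $\ell$ (a constant, or slowly growing), and in each block look at a surrounding box of side $O(\ell)$. Using the standard supercritical percolation estimates (e.g. from \cite{Grimmett}): for $p>5/6$, the probability that such a box fails to contain a ``good'' crossing cluster linking the two ends of the block is at most $C e^{-c\ell}$ for absolute constants $c,C>0$ (uniformly in $p\in(5/6,1)$). On the complementary event, the block contributes at most $K\ell$ to the distance for some absolute constant $K$. Declare a block \emph{bad} otherwise. Then $D(n,0)\le K\ell\cdot(\#\text{good blocks}) + (\text{cost of bad blocks})$. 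The number of bad blocks is stochastically dominated by a sum of i.i.d.\ (or at least dominated, possibly after a dependency-range argument splitting into $O(1)$ independent families) indicators with exponentially small mean, so it is $O(n e^{-c\ell})$ with overwhelming probability, and large deviations give $\P(\#\text{bad blocks}\ge t) \le e^{-c' t}$ for $t$ not too small.

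The remaining point — and the main obstacle — is bounding the \emph{cost} contributed by bad blocks, since a bad block could in principle force a long excursion. Here I would use an unconditional global bound: on $\{\Zero\leftrightarrow(n,0)\leftrightarrow\infty\}$, a path realizing $D(n,0)$ exists inside the infinite cluster, and by a renormalization/block argument the diameter of the cluster within a box of side $N$ is at most $C N$ except with probability $e^{-cN}$. Thus one always has the crude a priori bound $D(n,0)\le C_0 n$ on an event of probability $1-e^{-c n}$, and on the (tiny) bad event one can still use that $D(n,0)$ is finite on $\{\Zero\leftrightarrow(n,0)\}$ together with a deterministic bound like $D(n,0)\le$ (number of open edges in a box of radius $R$) $\le (2R+1)^2$, choosing $R$ via the size of the closed dual circuit that separates things; the probability of needing $R$ large is $e^{-cR}$ by duality. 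Assembling: write
\begin{equation*}
\E(D^2(n,0)\infin) \le (C_0 n)^2 \cdot \P(\text{typical event}) + \sum_{R\ge C_0 n} (2R+1)^4 \,\P(D(n,0)\in(R,R+1]),
\end{equation*}
and the tail $\P(D(n,0)>R)\le e^{-c R}$ for $R\ge C_0 n$ makes the second sum $o(1)$. This yields $\E(D^2(n,0)\infin)\le (C_0 n)^2 + o(1) \le (C_\delta n^\delta)^2$ for any $\delta>1$ once $n$ is large, and for small $n$ one absorbs the constant — actually the bound holds with $\delta$ as close to $1$ as desired, even $\delta=1$ up to a constant, but stating it with $\delta>1$ gives room to be cavalier about the bad-block cost. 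The delicate step is verifying the uniformity of the percolation estimates over $p\in(5/6,1)$; this is where I would be most careful, invoking that all the exponential-decay constants in supercritical percolation can be taken uniform on $[5/6,1)$ (monotonicity in $p$ handles the direction we need).
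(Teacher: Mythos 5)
Your far-tail mechanism --- if $\Zero$ and $(n,0)$ are connected globally but not connected inside a box containing them, a long dual path of closed edges must enter the box, at cost $(3\eps)^{\mathrm{length}}$ --- is exactly the engine of the paper's proof. The problem is the shape of your boxes, and it is where the gap lies. With square boxes of radius $R$ you must take $R\ge n$ for the box to contain both endpoints, and then ``connected inside the box'' only gives $D(n,0)\le|\mathrm{box}|=O(R^2)=O(n^2)$ at best. So the square-box route alone yields $\E(D^2(n,0)\infin)\le Cn^4$, i.e.\ the lemma only for $\delta\ge 2$. That is not enough for the paper: the error term $c_\delta\eps^2 n^{\delta-1}$ of Proposition \ref{bornemu} must be $o(\eps)$ while $K/n=o(\eps)$ and $K\to\infty$, which forces $\delta<2$ (the paper takes $\delta=3/2$, $n=\eps^{-3/2}$). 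The paper gets below $\delta=2$ with an elementary trick you are missing: it uses \emph{thin} boxes $[-i^{1/(2\delta)},\,n+i^{1/(2\delta)}]\times[-i^{q}/6,\,i^{q}/6]$ with $q+\frac1{2\delta}\le\frac12$, whose area is $\le\sqrt i$; connection inside the box then forces $D(n,0)\le\sqrt i$, while disconnection inside the box still forces a dual path of length $\ge i^{q}/3$, and the exponent $\delta>1$ is precisely what leaves room for a nontrivial height $i^{q}$.

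To beat $n^2$ you instead lean on the a priori linear bound $D(n,0)\le C_0n$ off an event of probability $e^{-cn}$, and this is the step you do not actually prove. Your block decomposition correctly identifies ``the cost contributed by bad blocks'' as the main obstacle, and then resolves it by invoking ``the diameter of the cluster within a box of side $N$ is at most $CN$ except with probability $e^{-cN}$'' --- which is the very linear bound the block argument was meant to establish, so the argument is circular as written. The statement is true (it is an Antal--Pisztora-type chemical-distance estimate; for $\eps<1/6$ one can also derive it by controlling the total size of the subcritical dual clusters meeting the segment), but it is a substantive theorem that must be proved or cited, and the paper's thin-box computation deliberately avoids needing it. A secondary inaccuracy: the tail $\P\{D(n,0)>R\}\le e^{-cR}$ for all $R\ge C_0n$ is not what your dual-path mechanism delivers (it gives $e^{-c\sqrt R}$ in the regime $R\gg n^2$); this is harmless for the convergence of your second-moment sum, but the stronger form should not be asserted.
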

\begin{proof}
We have
\begin{equation*}
\E(D^2(n,0)\infin)\le n^{2\delta}+ \sum_{i=n^{2\delta}}^{\infty}\P\{D(n,0)\ge \sqrt{i}, \Zero\leftrightarrow (n,0)\leftrightarrow \infty\}.
\end{equation*}
Fix some $q\in (0,\frac{1}{2\delta})$ such that $q+\frac{1}{2\delta}\le \frac{1}{2}$ and for
 $i\ge n^{2\delta}$, let $\Gamma_i$ be the box $[-i^{\frac{1}{2\delta}},n+i^{\frac{1}{2\delta}}]\times[- \frac{i^{q}}{6},\frac{i^{q}}{6}]$.
A self avoiding path in $\Gamma_i$ has less than $|\Gamma_i|=\frac{i^{q}}{3}(n+2i^{\frac{1}{2\delta}})\le \sqrt{i}$ steps. Thus
\begin{equation*}
\P\{D(n,0)\ge \sqrt{i}, \Zero\leftrightarrow (n,0) \leftrightarrow  \infty\}\le \P\{\Zero\nleftrightarrow {(n,0)} \mbox{ in } \Gamma_i, \Zero\leftrightarrow (n,0) \leftrightarrow \infty\}.
\end{equation*}
The event $\{\Zero\nleftrightarrow {(n,0)} \mbox{ in } \Gamma_i, \Zero\leftrightarrow (n,0) \leftrightarrow \infty\}$ implies the existence in the dual of a path of closed edges starting from the border of $\Gamma_i$, and having at least $\frac{i^{q}}{3}$ steps since it must disconnect $\Zero$ and $(n,0)$. Using that $|\partial \Gamma_i |\le 6i^{\frac{1}{2\delta}}$, we get
\begin{equation*}\P\{D(n,0)\ge \sqrt{i}, \Zero\leftrightarrow (n,0) \leftrightarrow \infty\}\le 6i^{\frac{1}{2\delta}}(3(1-p))^{\frac{i^{q}}{3}}.\end{equation*}
This yields, for $p\in (5/6,1)$,
\begin{equation*}\E(D^2(n,0)\infi)\le n^{2\delta} + 6\sum_{i=n^{2\delta}}^{\infty}  i^{\frac{1}{2\delta}}\left(\frac{1}{2}\right)^{\frac{i^{q}}{3}}\le (C_{\delta}n^{\delta})^2 .\end{equation*}
\end{proof}

\begin{lem}\label{Lem:Tn} Recall that $T_n(1)$ denotes the first point among $\set{n,2n,3n,\dots}$ which is in the infinite component. There exists $C>0$ such that for any $p\in(5/6,1)$, any $n\ge 10$ and $j\ge 2$
\begin{equation}
\P\set{T_n(1)\ge jn}\le C(1-p)^{4+\sqrt{j-2}}.\label{Majo:Tn}
\end{equation}
\end{lem}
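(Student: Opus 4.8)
The plan is the following dichotomy. On $\{T_n(1)\ge jn\}$ each vertex $v_i:=(in,0)$, $1\le i\le j-1$, lies in a finite cluster (a.s.\ there is a unique infinite cluster since $p>1/2$), hence is surrounded by a closed circuit of the dual lattice. Either one of these dual circuits is long, which is exponentially unlikely, or a family of order $\sqrt{j-2}$ of the $v_i$, chosen pairwise far apart, are each surrounded by a \emph{short} closed dual circuit; since short circuits around distant vertices are determined by disjoint sets of edges, these events are independent and their conjunction has probability a high power of a quantity $<1$.

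I would first record the Peierls estimates, valid for $p\in(5/6,1)$ so that $3(1-p)\le 1/2$. The number of self-avoiding closed dual circuits of length $\ell$ around a fixed vertex being at most $\ell\,3^\ell$, the probability that a fixed vertex is surrounded by a closed dual circuit of length $>L$ is at most $\sum_{\ell>L}\ell(3(1-p))^\ell\le(2L+4)(3(1-p))^L$; and --- counting the shortest dual circuits exactly (only the unit dual square has length $4$, four circuits have length $6$, \dots) --- $\P\{v_i\nleftrightarrow\infty\}\le\rho(p):=c_1(1-p)^4$ for a small absolute constant $c_1$ (with $c_1<6^3$, so $\rho(p)<1$ on the whole range). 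Keeping $c_1$ small here, rather than using the crude bound throughout, is what lets the exponent counting below balance.

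Now fix $j$ above an absolute threshold $j_0$. Let $M:=\lceil\sqrt{j-2}\rceil$, $q:=\lfloor(j-2)/(M-1)\rfloor\ (\ge 1)$, $\tau:=qn-3$, and take the equally spaced indices $i_a:=1+(a-1)q$, $1\le a\le M$, which lie in $\{1,\dots,j-1\}$; consecutive chosen vertices are at distance $qn$, so the boxes of $\ell^\infty$-radius $\tau/2+1$ about the $v_{i_a}$ are pairwise disjoint. With $A_a$ the (decreasing) event, measurable in the box about $v_{i_a}$, that $v_{i_a}$ is surrounded by a closed dual circuit of length $\le\tau$,
\[\{T_n(1)\ge jn\}\ \subseteq\ \Big(\bigcap_{a=1}^M A_a\Big)\ \cup\ \bigcup_{a=1}^M\big\{v_{i_a}\text{ surrounded by a closed dual circuit of length}>\tau\big\},\]
whence, by independence and the tail estimate, $\P\{T_n(1)\ge jn\}\le\rho(p)^M+M(2\tau+4)(3(1-p))^\tau$. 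In the first term, $\rho(p)^M=c_1^M(1-p)^{4M}\le c_1^M\,6^{-(4M-4-\sqrt{j-2})}=6^4(c_1/6^4)^M 6^{\sqrt{j-2}}$ (using $1-p<1/6$ and $4M-4-\sqrt{j-2}>0$), which is at most $6^4$ since $M\ge\sqrt{j-2}$ and $c_1/6^3<1$. In the second term, $3^\tau\le(1-p)^{-\tau\log 3/\log 6}$ gives $(3(1-p))^\tau\le(1-p)^{(\log 2/\log 6)\tau}$; since $q\asymp\sqrt{j-2}$ one has $\tau\asymp n\sqrt{j-2}$, so $(\log 2/\log 6)\tau$ exceeds $4+\sqrt{j-2}$ by a constant multiple of $\sqrt{j-2}$, which swallows the prefactor $M(2\tau+4)$ and the factor $(1-p)^{-4}$; this works exactly because $(\log 2/\log 6)\,n>1$, i.e.\ because $n\ge 10$ (a smaller $n$ would leave no room). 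Hence $\P\{T_n(1)\ge jn\}\le C_0(1-p)^{4+\sqrt{j-2}}$ for $j\ge j_0$.

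For the remaining (finitely many) values $2\le j<j_0$, a crude union bound suffices: on $\{T_n(1)\ge jn\}$ one extracts, from the external contours of the (at most $j-1$) filled clusters containing the $v_i$, a system of edge-disjoint closed dual circuits enclosing, between them, all $j-1$ vertices, and a circuit enclosing $s$ of them has length at least $4s$ (as $n\ge 2$), so the total length is at least $4(j-1)$; summing $(1-p)^{\text{total length}}$ over all such systems gives, for fixed $j$, at most $C_j(1-p)^{4(j-1)}$ with $C_j$ independent of $n$ and $p$, and $(1-p)^{4(j-1)}\le(1-p)^{4+\sqrt{j-2}}$. Taking $C:=\max(C_0,\max_{j<j_0}C_j)$ proves the lemma. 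The whole difficulty is the constant-matching just sketched: the $3^\ell$ entropy of dual circuits forces the length threshold $\tau$ above $\tfrac{\log 6}{\log 2}\sqrt{j-2}$, while independence forces the $\asymp\sqrt{j-2}$ chosen vertices to be spaced $qn\asymp n\sqrt{j-2}$ apart with $qn>\tau$; the hypotheses $n\ge 10$ and $p>5/6$, together with counting the shortest circuits exactly, are precisely what reconcile these constraints and keep $\rho(p)<1$.
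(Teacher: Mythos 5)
Your proof follows essentially the same route as the paper's: select about $\sqrt{j}$ well-separated points among $(n,0),\dots,((j-1)n,0)$, observe that disconnection by a \emph{short} closed dual circuit is a local event so that these events are independent, each of probability $O((1-p)^4)$, and absorb the possibility that some point is only disconnected by a long circuit into an exponentially small Peierls term; the small values of $j$ are treated separately in both arguments. The proposal is correct (and in fact more explicit than the paper about the constant in the one-point disconnection estimate and about why the spacing $qn$ beats the entropy of long circuits), so nothing further is needed.
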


\begin{proof}
For $j=2$, the left-hand side in \eqref{Majo:Tn}  is simply $\P\{(n,0)\text{ is disconnected }\}\leq C(1-p)^4$. For $3\leq j\leq 10$, the event $\set{T_n(1)\ge jn}$ is included in $\{(n,0)\nleftrightarrow \infty $ and $(2n,0)\nleftrightarrow \infty\}$. These two points are either disconnected by two different paths, or by the same path (which then has length $\geq 2n$). The latter case has a much smaller probability, and then $\P\{T_n(1)\ge jn\}\leq C'(1-p)^8$, and \eqref{Majo:Tn} holds.

We now do the case $j\geq 10$. For each integer $i$, let $A_i$ be the event that $(ni,0)$ is disconnected, and let
\begin{equation*}
A_{i,r}=\set{(ni,0)\text{ is disconnected by a path included in }[ni-r;ni+r]\times[-r;r]}.
\end{equation*}
One plainly has that each $A_{i,r}\subset A_i$ and that $A_{i,r},A_{j,r}$ are independent as soon as the corresponding boxes are disjoint, namely if $2r\leq n|j-i|$. For $j\geq 2$, set $J=\lf \sqrt{j}\rf$, we write
\begin{align*}
\P\{T_n(1)\ge jn\}&=\P\{A_{1},A_{2},\dots,A_{j-1}\}\leq\P\{A_{J},A_{2J},A_{(J-1)\times J}\}\\
&\leq \prod_{\ell=1}^{J-1} \P\{A_{\ell J,nJ/2}\}
+ \P\{ \exists \ell\leq J-1 ; A_J \setminus A_{\ell J,nJ/2}\}\\
&\leq (1-p)^{4(J-1)} +J\sum_{i\geq nJ}i(3(1-p))^i\\
&\leq (1-p)^{4(J-1)}+ 4nJ^2(3(1-p))^{nJ}\\
&\leq C(1-p)^{4+\sqrt{j-2}},
\end{align*}
since a path disconnecting $(n\ell J,0)$ which is not included in the box $[n\ell J -\frac{nJ}{2};n\ell J+\frac{nJ}{2}]\times[-\frac{nJ}{2};\frac{nJ}{2}]$ has at least $nJ$ edges.
\end{proof}

We are now able to prove Proposition \ref{bornemu}.
\begin{proof}[Proof of Proposition \ref{bornemu}]
We write, using Lemmas \ref{roughbound} and \ref{Lem:Tn}:
\begin{eqnarray*}
\E(D(T_n(1),0)\infi)&=&\sum_{j=1}^\infty\E(D(jn,0)\un_{\{T_n(1)=jn\}}\infi)\\
&\le & \E(D(n,0)\infin)\\
& &+\sum_{j=2}^\infty\E(D(jn,0)^2\un_{\{0\leftrightarrow (jn,0)\leftrightarrow \infty\}})^{1/2}\P\{T_n(1)=jn\}^{1/2}\\
&\le & \E(D(n,0)\infin)+\sum_{j=2}^\infty C_{\delta}(jn)^{\delta}\P\{T_n(1)\ge jn\}^{1/2}\\
&\le & \E(D(n,0)\infin)+C'_{\delta}n^{\delta}(1-p)^2 \sum_{j=2}^\infty j^{\delta}\left(\frac{1}{6}\right)^{(j-2)^{1/4}}\\
& \le &\E(D(n,0)\infin)+c_{\delta}(1-p)^2n^{\delta}.\\
\end{eqnarray*}
\end{proof}


\section{Percolation on a strip and TASEP}\label{Sec:Markov}

As a first step towards our main result, we shall study distances in percolation on an infinite strip. We will reduce this problem to the analysis of a finite particle system, this allows explicit computations from which will result the bounds in Theorem \ref{main}.

Here is the context we will deal with in the whole section.
Fix an integer $K$ and $\varepsilon\in(0,1)$.
Let $\Bande$ be the infinite strip $\Z\times \IntG -K,K\IntD$, with three kinds of edges :
\begin{itemize}
\item Vertical edges $\set{(i,j)\to(i,j+1),i\in \Z,j\in \IntG -K,K-1\IntD}$;
\item Horizontal edges $\set{(i,j)\to(i+1,j),i\in \Z,j\in \IntG -K,K\IntD}$;
\item Diagonal edges $\set{(i,j)\to(i+1,j+1)\text{ and }(i,j)\to(i+1,j-1)}$.
\end{itemize}

We now consider a random subgraph of $\Bande$ equipped with distances:
\begin{hypo}\label{Model:Tore}
\begin{itemize}
\item[\emph{(i)}] Vertical and horizontal edges have length $1$, whereas diagonal edges have length $2$.
\item[\emph{(ii)}] Diagonal and vertical edges are open.
\item[\emph{(iii)}] Each horizontal edge is open (resp. closed) independently with probability $1-\varepsilon$ (resp. $\varepsilon$).
\end{itemize}
\end{hypo}

For $i\ge 0$ and $j\in \IntG -K,K\IntD $, let $D^{\Tore}(i,j)$ be the distance between $(0,0)$ and $(i,j)$ inside $\Bande$ in the Cross Model (see an example in Fig. 1), the '$d$' stands for the addition of diagonal edges. Sometimes we also need to consider the distance between two vertices $x,y$
of $\Bande$, which will be denoted by $D^{\Tore}(x\to y)$.

Since vertical and diagonal edges are open, every point in $\Bande$ is connected in the Cross Model to $\Zero$, hence $D^{\Tore}(i,j)$ is finite for every $i,j$. We also set $\mathbf{D}^{\Tore}_i=\set{D^{\Tore}(i,j),j\in \IntG -K,K\IntD}$.
By construction, we have along each vertical edge $|D^{\Tore}(i,j)-D^{\Tore}(i,j+1)|=1$.

\begin{figure}[h!]
\label{Fig:TASEP}
\begin{center}
\includegraphics[width=70mm]{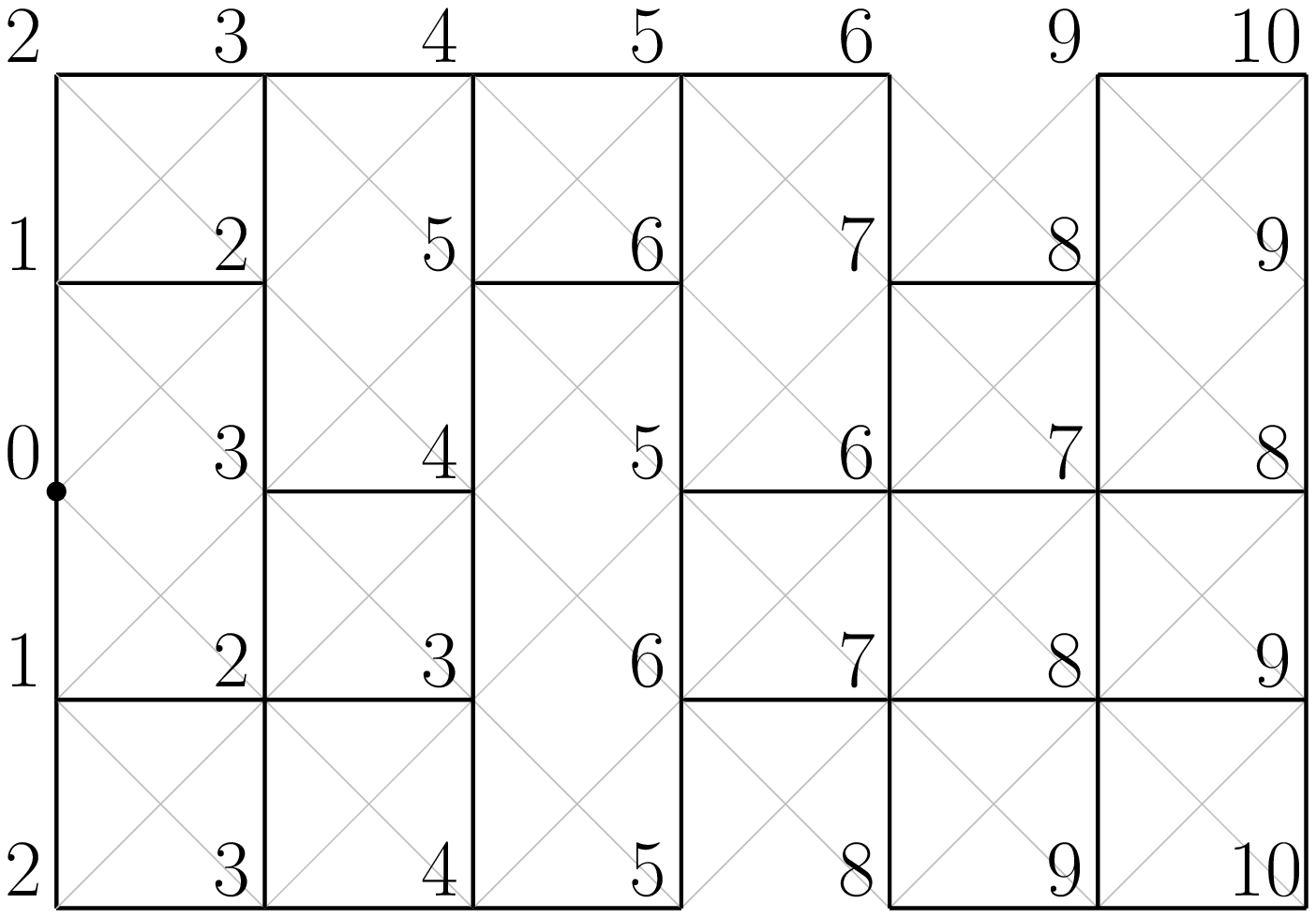}
\caption{A configuration of percolation in $\Bande$ for $K=2$ and the associated distances
$\mathbf{D}^{\Tore}$. Note the importance of diagonal edges: $\mathbf{D}^{\Tore}(3,0)=5$ instead of $7$ if there was none.}
\end{center}
\end{figure}

The main goal of this section is to estimate $D^{\Tore}(n,0)$, when $K,n$ are large. To do so, we introduce a particle system associated to the process $(\mathbf{D}^{\Tore}_i)_{i\ge 0}$. 
Let us consider the state space $\set{\bullet,\circ}^{2K}$ (identified to $\set{1,0}^{2K}$), and denote its elements in the form
$$
(y^{-K+1},y^{-K+2},\dots,y^{0},y^1,\dots,y^K).
$$
Let $(\mathbf{Y}_i)_{i\geq 0}$ be the process with values in
$\set{\bullet,\circ}^{2K}$  defined as follows :
\begin{equation*}\forall j\in \IntG -K+1,K\IntD,\qquad
Y_i^j =
\begin{cases}
&\bullet =1 \text{ if } D^{\Tore}(i,j)= D^{\Tore}(i,j-1) -1.\\
&\circ =0  \text{ if } D^{\Tore}(i,j)= D^{\Tore}(i,j-1) +1.
\end{cases}
\end{equation*}
Let say that the site $j$ is \emph{occupied} by a particle at time $i$ if $Y^j_i=\bullet$ and \emph{empty} otherwise.

\begin{figure}[h!]
\label{Fig:TASEP_avec_particules}
\begin{center}
\includegraphics[width=70mm]{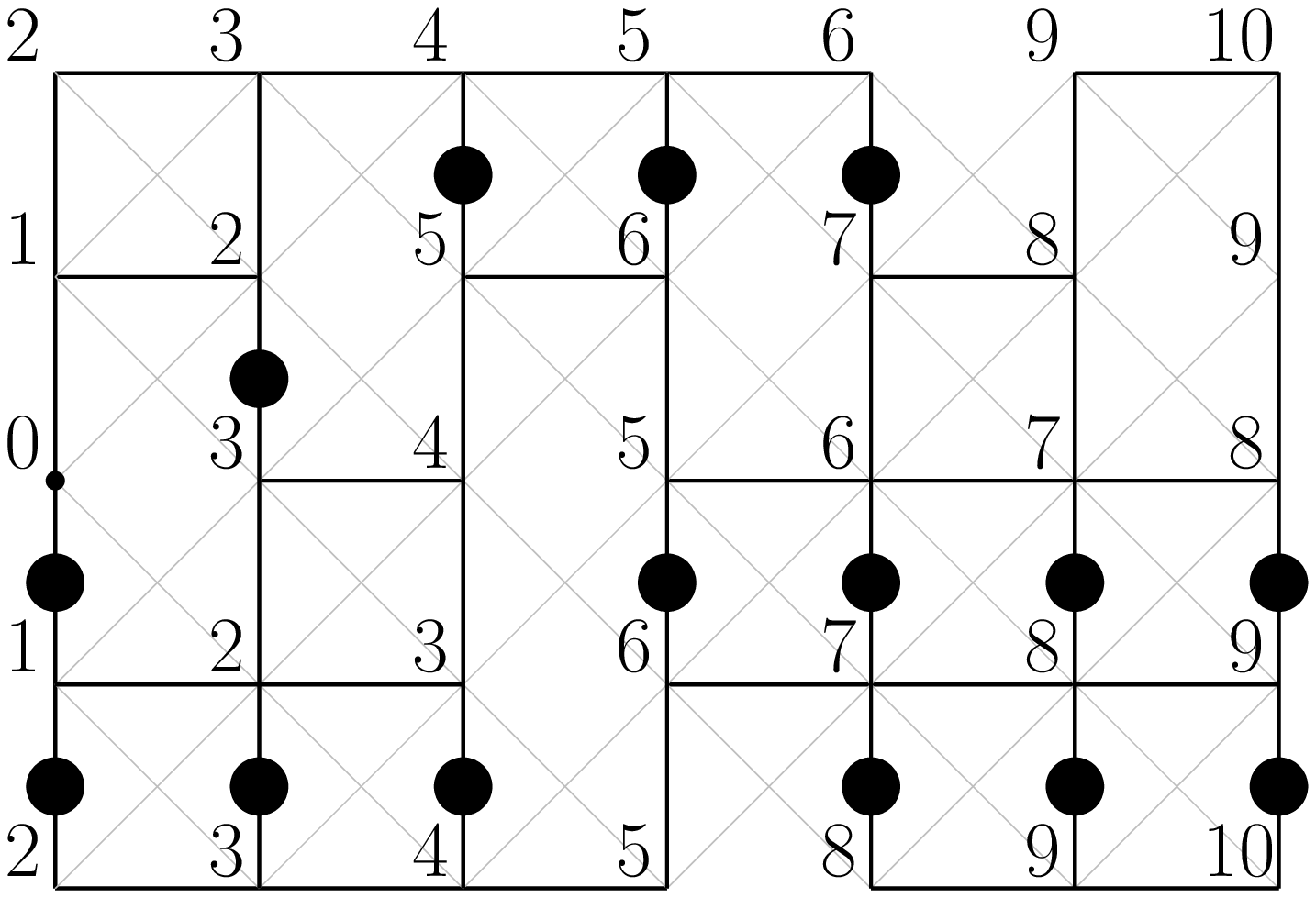}
\caption{The same configuration with particles.}
\end{center}
\end{figure}

\begin{definition}
The synchronous \emph{Totally Asymmetric Simple Exclusion Process} (TASEP) on $\IntG -K+1,K\IntD$ with jump rate $\alpha$, exit rate $\beta$ and entry rate $\gamma$ is the Markov chain with state space $\set{\bullet,\circ}^{2K}$ defined as follows:
\begin{center}
\includegraphics[width=50mm]{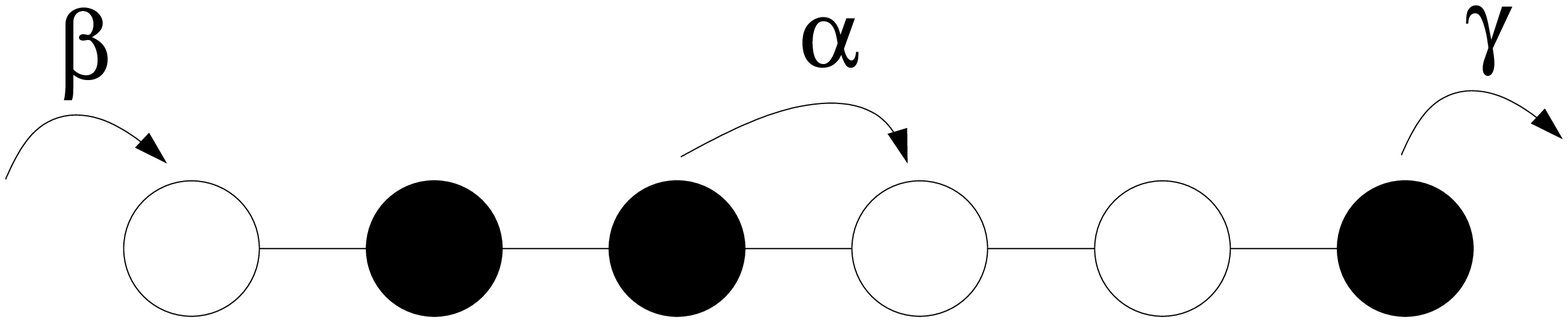}
\end{center}
\begin{itemize}
\item at time $t+1$, for each $j$, a particle at position $j=-K+1,\dots, K-1$ moves one step forward if the site $j+1$ is empty at time $t$, with probability $\alpha$ and independently from the other particles.
\item at time $t+1$, a particle enters the system at position $-K+1$ if site $-K+1$ is empty at time $t$, with probability $\beta$.
\item at time $t+1$, if there were a particle at position $K$ at time $t$, it exits the system with probability $\gamma$.
\end{itemize}
\end{definition}

\begin{prop}[The particles follow a TASEP]\label{Prop:ChaineMarkov}
The processes $(\mathbf{D}^{\Tore}_i)_{i\ge 0}$ and $(\mathbf{Y}_i)_{i\ge 0}$
are Markov chains. Moreover, $(\mathbf{Y}_i)_{i\ge 0}$ has the law of discrete time synchronous TASEP on $\IntG -K+1,K\IntD$ with jump rate $\varepsilon$ and exit and entry rate $\varepsilon$.
\end{prop}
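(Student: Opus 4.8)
The plan is to exhibit a deterministic \emph{local} recursion for the distance profile $\mathbf{D}^{\Tore}$ and then read it through the gradient (particle) variables $\mathbf{Y}$.

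\emph{Step 1 (no backtracking).} First I would show that for every vertex $(i,j)$ with $i\ge 0$, some geodesic from $\Zero$ to $(i,j)$ uses only vertices in columns $0,1,\dots,i$. This follows by a projection argument: along any geodesic, replace each vertex $(a,b)$ by $(\min(a,i),b)$; a horizontal edge crossing column $i$ becomes a zero-length loop, a diagonal edge crossing column $i$ becomes a vertical edge (open, of length $1\le 2$), and any edge lying entirely to the right of column $i$ becomes a loop or a vertical edge, so the projected walk is legal and no longer, and it still joins $\Zero$ to $(i,j)$. In particular $\mathbf{D}^{\Tore}_i$ is a function of the horizontal edge configurations $\omega_0,\dots,\omega_{i-1}$, where $\omega_i=(\omega_{i,j})_j$ records whether the horizontal edge between $(i,j)$ and $(i+1,j)$ is open, encoded as $\omega_{i,j}\in\{1,\infty\}$. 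Since distinct $\omega_i$ involve disjoint edges, $(\omega_i)_{i\ge0}$ is i.i.d.\ and $\omega_i$ is independent of $(\mathbf{D}^{\Tore}_0,\dots,\mathbf{D}^{\Tore}_i)$.

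\emph{Step 2 (the recursion).} Applying Step 1 to $(i+1,j)$ gives a geodesic confined to columns $\le i+1$; decomposing it at its last step from column $i$ to column $i+1$ and at the vertical walk that follows yields $D^{\Tore}(i+1,j)=\min_{j'}\big(c_{j'}+|j-j'|\big)$, where $c_{j'}=\min\big(D^{\Tore}(i,j')+\omega_{i,j'},\,D^{\Tore}(i,j'-1)+2,\,D^{\Tore}(i,j'+1)+2\big)$ (terms leaving the strip are $+\infty$; at $j'=\pm K$ the diagonal term going outside the strip is absent). Using only $|D^{\Tore}(i,j)-D^{\Tore}(i,j\pm1)|=1$, one checks $c_j\le c_{j\pm1}+1$: e.g.\ $c_j\le D^{\Tore}(i,j+1)+2$, while each of the three terms defining $c_{j+1}$ is at least $D^{\Tore}(i,j+1)+1$. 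A short induction on the length of a hypothetical geodesic to $(i+1,j)$ of length $<c_j$ — whose last step would have to be vertical from some $(i+1,j')$, producing a shorter geodesic to $(i+1,j')$ of length $<c_j-1\le c_{j'}$, a contradiction — gives $D^{\Tore}(i+1,j)\ge c_j$; the reverse inequality is realized by the obvious path. Hence the infimal convolution collapses and
\begin{equation*}
D^{\Tore}(i+1,j)=\min\big(D^{\Tore}(i,j)+\omega_{i,j},\ D^{\Tore}(i,j-1)+2,\ D^{\Tore}(i,j+1)+2\big).
\end{equation*}

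\emph{Step 3 (Markov property and gradients).} This reads $\mathbf{D}^{\Tore}_{i+1}=\Phi(\mathbf{D}^{\Tore}_i,\omega_i)$ with $\omega_i$ independent of the past, so $(\mathbf{D}^{\Tore}_i)$ is a Markov chain. Since $\Phi$ commutes with adding a constant to $\mathbf{D}^{\Tore}_i$, the increments $e_j:=D^{\Tore}(i,j)-D^{\Tore}(i,j-1)\in\{\pm1\}$ — which are precisely $\mathbf{Y}_i$, with $Y_i^j=\bullet\Leftrightarrow e_j=-1$ — evolve by a map $\mathbf{Y}_{i+1}=\Psi(\mathbf{Y}_i,\omega_i)$ depending on $\mathbf{D}^{\Tore}_i$ only through $\mathbf{Y}_i$; as $\mathbf{Y}_i$ is a function of $\mathbf{D}^{\Tore}_i$, the process $(\mathbf{Y}_i)$ is Markov as well. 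Substituting $D^{\Tore}(i,j)=D^{\Tore}(i,j-1)+e_j$ and $D^{\Tore}(i,j+1)=D^{\Tore}(i,j-1)+e_j+e_{j+1}$ into the recursion gives the explicit local formula $e'_j=\min(e_j+\omega_{i,j},\,2,\,e_j+e_{j+1}+2)-\min(\omega_{i,j-1},\,2-e_{j-1},\,e_j+2)$, so $Y_{i+1}^j$ is a function of $(Y_i^{j-1},Y_i^j,Y_i^{j+1})$ and $(\omega_{i,j-1},\omega_{i,j})$ alone, with the evident one-sided versions at $j=-K+1$ and $j=K$.

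\emph{Step 4 (matching the TASEP) and the main difficulty.} It then remains to run through the finitely many patterns and check that $\Psi$ is exactly the synchronous TASEP update with $\alpha=\beta=\gamma=\varepsilon$: a particle immediately followed by a hole jumps forward iff the horizontal edge beneath it is closed (probability $\varepsilon$) and stays otherwise; a particle followed by a particle never moves (regardless of its edge); a hole not fed from the left remains a hole; at the right end a particle at $K$ exits iff edge $K$ is closed; at the left end a particle enters at $-K+1$ iff that site is empty and edge $-K$ is closed — so exit and entry rates are both $\varepsilon$. As the motion of the particle occupying a given site is governed by that site's own edge variable and the $\omega_{i,j}$ are independent, the simultaneous moves are independent, as the synchronous dynamics requires. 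The conceptual heart — and the step I expect to be the main obstacle — is Step 2: the no-backtracking lemma together with the estimate $c_j\le c_{j\pm1}+1$, which is exactly what makes the recursion local and thus lets it be read as a particle dynamics; once that is established, Steps 3 and 4 are a routine, if somewhat tedious, verification.
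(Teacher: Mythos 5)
Your proposal is correct and follows essentially the same route as the paper: both rest on the observation that open vertical edges rule out leftward steps, so that $\mathbf{D}^{K,d}_{i+1}$ is a local function of $\mathbf{D}^{K,d}_i$ and the column-$i$ horizontal edges, followed by a case check identifying the gradient dynamics with the synchronous TASEP. Your Step 2 simply makes explicit, as a min-plus recursion with the collapse estimate $c_j\le c_{j\pm1}+1$, the local update rule that the paper verifies pictorially case by case; this is a tidier formalization but not a different argument.
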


\begin{proof}[Proof of Proposition \ref{Prop:ChaineMarkov}]
Let us note that since all the vertical edges are open, the optimal path from $\Zero$ to $(i,j)\in \Bande$, $i\ge 0$ never does a step from right to left.
Moreover, the vector $\mathbf{D}^{\Tore}_{i+1}$ depends only on $\mathbf{D}^{\Tore}_i$ and on the edges $\{(i,j)\to (i+1,j),j\in  \IntG -K,K\IntD\}$, hence it is Markov.

Let us now prove that the displacement of particles follows the rules of TASEP. We detail the case in which there is a particle on the edge $(i,j-1)\to (i,j)$ but no particle on the edge $(i,j)\to (i,j+1)$, \emph{i.e.} at time $i$ there is a particle in position $j$ and no particle in position $j+1$.

This means that if $D^{\Tore}(i,j)=\ell$ then $D^{\Tore}(i,j-1)=D^{\Tore}(i,j+1)=\ell+1$. Then, whether the horizontal edges $(i,j-1)\to (i+1,j-1)$ and $(i,j+1)\to (i+1,j+1)$ are open or closed, we have $D^{\Tore}(i+1,j-1)= D^{\Tore}(i+1,j+1)=\ell +2$ (this is because the two diagonal edges starting from $(i,j)$ are open), see the left part of the figure below:

\vspace{5mm}
\begin{center}
\includegraphics[width=65mm]{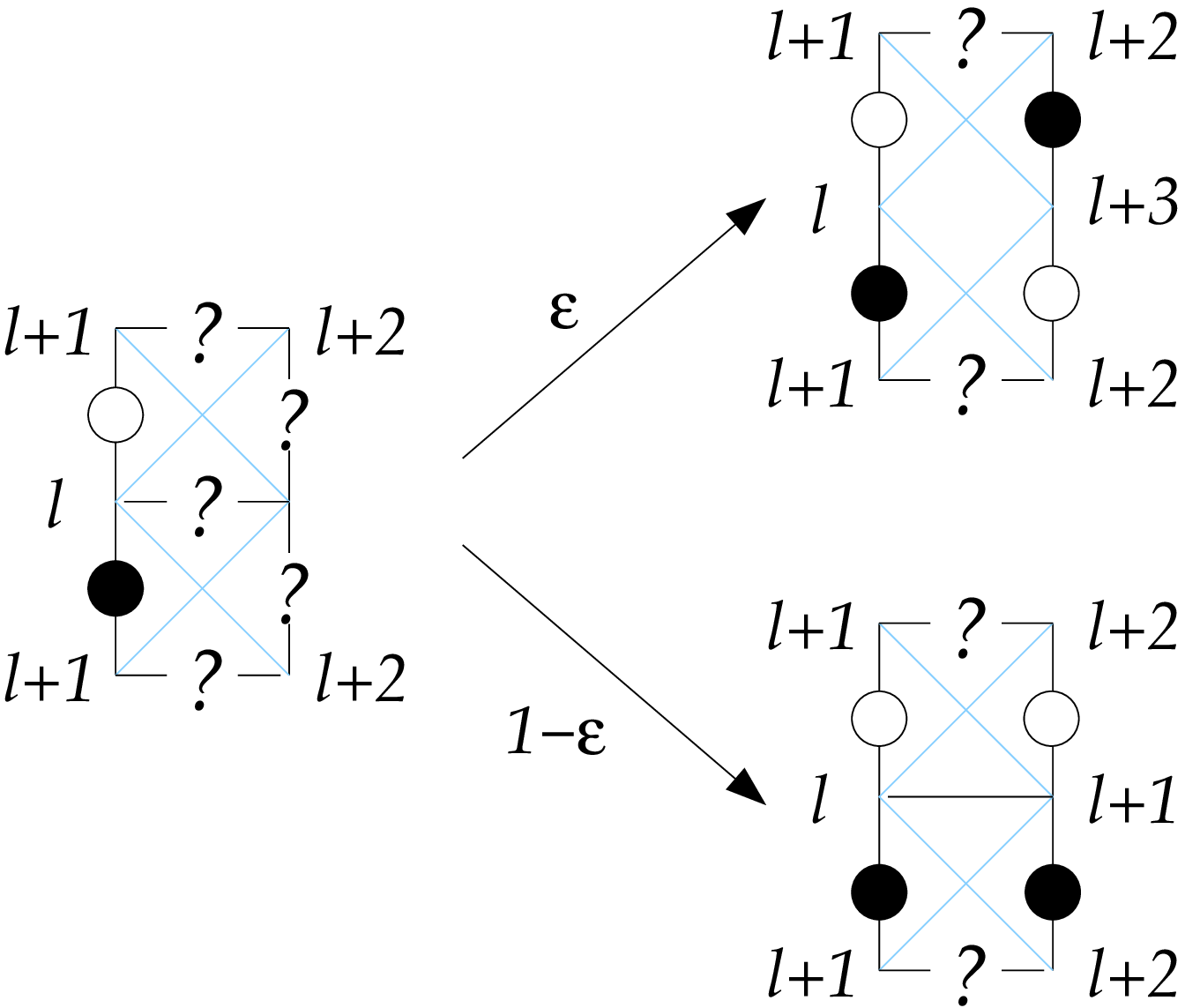}
\end{center}
\vspace{5mm}

Now, $D^{\Tore}(i+1,j)$ depends only on the edge $(i,j)\to (i+1,j)$: it is equal to $\ell+1$ if this edge is open and the particle lying at $j$ stays put.
If the edge $(i,j)\to (i+1,j)$ is closed, $D^{\Tore}(i+1,j)=\ell+3$, which corresponds for the particle lying at $j$ to a move to $j+1$.

We leave the cases in which a particle is followed by an other particle and in which an empty edge is followed by an empty edge, which are similar, to the reader.

We do the bottom boundary case (when a particle may enter the system, see the figure below). Assume that there is no particle at time $i$ 
in position $-K+1$. This means that if we set $\ell=D(i,-K)$ then $D(i,-K+1)=\ell+1$.
If the edge $(i,-K)\to(i+1,-K)$ is open then $D(i+1,-K)=\ell +1$ and there is no particle at time $i+1$ in position $-K+1$. 
Otherwise $D(i+1,-K)=\ell +3$ and a particle appears at time $i+1$ in $-K+1$.

\vspace{5mm}
\begin{center}
\includegraphics[width=65mm]{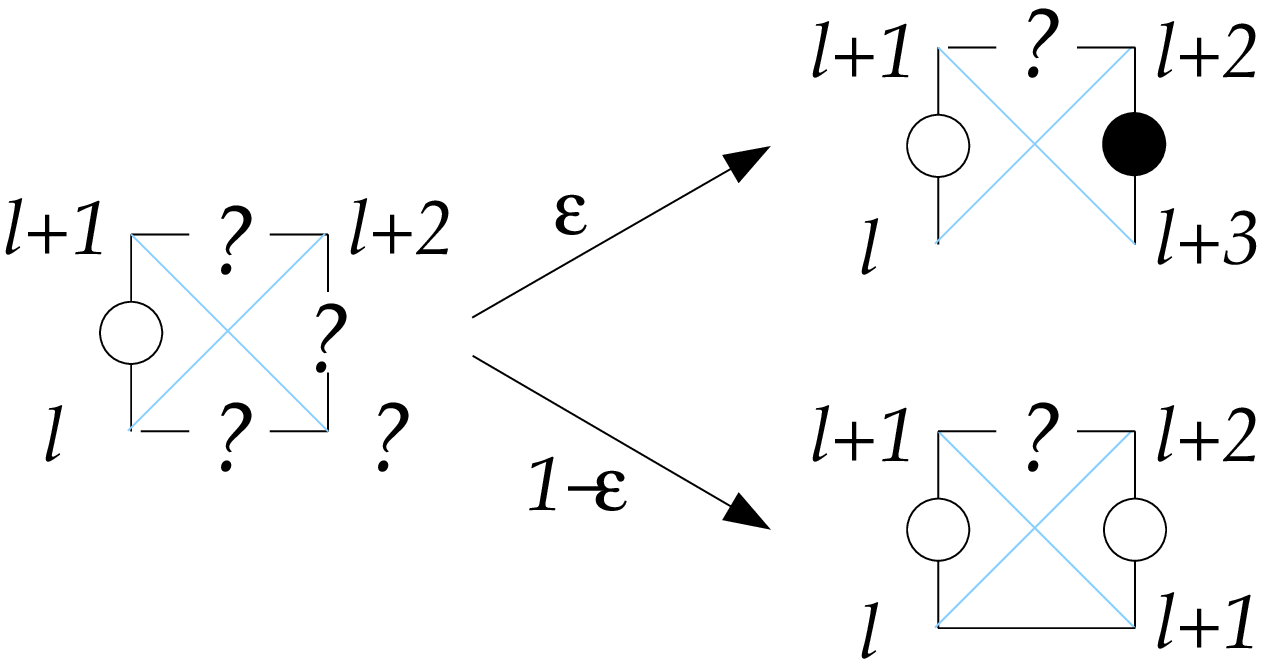}
\end{center}
\vspace{5mm}
The right-boundary case (when a particle exits) is similar.

\end{proof}
We have thus seen in the proof that the knowledge of $(\mathbf{Y}_i)_{i\geq0}$ fully determines the metric $D^{\Tore}$ using the following recursive identity:
$$D^{\Tore}(i+1,j)=D^{\Tore}(i,j)+1+2.\un_{\set{Y_i^j=\bullet, Y_i^{j+1}=\circ, Y_{i+1}^j=\circ, Y_{i+1}^{j+1}=\bullet }}.$$
Let $\nu_{K,\varepsilon}$ be the stationary measure of the synchronous TASEP $(\mathbf{Y}_i)_{i\ge 0}$.
Let
\begin{equation*}
\nu_{K,\varepsilon}(\bullet,\circ)\defeq\nu_{K,\varepsilon}(y^{0}=\bullet,y^{1}=\circ).
\end{equation*}

\begin{prop}\label{Coro:ConstanteTore}
We have the following asymptotics for the distances on $\Bande$ in the Cross Model:
\begin{equation*}
\lim_{n\rightarrow\infty}\frac{1}{n}\E(D^{\Tore}(n,0))= 1+2\varepsilon\nu_{K,\varepsilon}(\bullet,\circ).
\end{equation*}
\end{prop}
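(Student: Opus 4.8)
The plan is to use the recursive identity established in the proof of Proposition~\ref{Prop:ChaineMarkov}, namely
\[
D^{\Tore}(i+1,j)=D^{\Tore}(i,j)+1+2\cdot\un_{\set{Y_i^j=\bullet,\,Y_i^{j+1}=\circ,\,Y_{i+1}^j=\circ,\,Y_{i+1}^{j+1}=\bullet}},
\]
and to telescope it along $i=0,1,\dots,n-1$ at the fixed row $j=0$. This gives the exact decomposition
\[
D^{\Tore}(n,0)=n+2\sum_{i=0}^{n-1}\un_{\set{Y_i^0=\bullet,\,Y_i^{1}=\circ,\,Y_{i+1}^0=\circ,\,Y_{i+1}^{1}=\bullet}},
\]
using that $D^{\Tore}(0,0)=0$. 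Taking expectations and dividing by $n$, the theorem reduces to showing that the Cesàro average of $\P\set{Y_i^0=\bullet,\,Y_i^{1}=\circ,\,Y_{i+1}^0=\circ,\,Y_{i+1}^{1}=\bullet}$ over $i<n$ converges to $\varepsilon\,\nu_{K,\varepsilon}(\bullet,\circ)$ as $n\to\infty$.

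Next I would identify that single-step probability. Condition on $\mathbf{Y}_i$: the event in the indicator requires that at time $i$ site $0$ carries a particle and site $1$ is empty (an event of $\mathbf{Y}_i$-probability that I call $\P\set{Y_i^0=\bullet,Y_i^1=\circ}$), and then, given that local pattern, the TASEP dynamics make the particle jump from $0$ to $1$ with probability exactly $\varepsilon$ (the jump rate), independently of everything else in the update. Hence
\[
\P\set{Y_i^0=\bullet,\,Y_i^{1}=\circ,\,Y_{i+1}^0=\circ,\,Y_{i+1}^{1}=\bullet}
=\varepsilon\,\P\set{Y_i^0=\bullet,\,Y_i^1=\circ}.
\]
So the quantity to control is the Cesàro average $\frac1n\sum_{i=0}^{n-1}\P\set{Y_i^0=\bullet,Y_i^1=\circ}$.

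Finally, since $(\mathbf{Y}_i)_{i\ge0}$ is an irreducible (and aperiodic, owing to the possibility of no move in a step because $\varepsilon<1$) finite-state Markov chain, it is ergodic: for any initial law, $\P\set{Y_i^0=\bullet,Y_i^1=\circ}\to\nu_{K,\varepsilon}(\bullet,\circ)$ as $i\to\infty$, and therefore the Cesàro averages converge to the same limit. Here the initial configuration $\mathbf{Y}_0$ is the deterministic one read off from $\mathbf{D}^{\Tore}_0=\set{|j|:\,j\in\IntG-K,K\IntD}$, i.e.\ all sites with $j\le0$ empty and all sites with $j\ge1$ occupied, but this plays no role. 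Combining the three displays yields
\[
\frac1n\E(D^{\Tore}(n,0))=1+\frac{2}{n}\sum_{i=0}^{n-1}\varepsilon\,\P\set{Y_i^0=\bullet,Y_i^1=\circ}\xrightarrow[n\to\infty]{}1+2\varepsilon\,\nu_{K,\varepsilon}(\bullet,\circ),
\]
which is the claim. I expect the only genuinely delicate point to be the clean justification of the factorization $\P\set{\cdots}=\varepsilon\,\P\set{Y_i^0=\bullet,Y_i^1=\circ}$: one must check that, conditionally on the pattern $(Y_i^0,Y_i^1)=(\bullet,\circ)$, the event $(Y_{i+1}^0,Y_{i+1}^1)=(\circ,\bullet)$ is exactly the event ``the particle at site $0$ is selected to jump'' — in particular that no particle from site $-1$ can arrive at site $0$ in the same step when site $0$ is already occupied, and that the exclusion rule does not couple this transition to the states of sites $2,3,\dots$. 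This is immediate from the synchronous update rule (a particle at $j$ moves iff $j+1$ is empty \emph{at time $i$}, so the simultaneous updates at different bonds that could interfere are ruled out here), but it deserves to be spelled out. Everything else — the telescoping and the ergodic averaging — is routine.
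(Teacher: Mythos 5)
Your proposal is correct and follows essentially the same route as the paper: telescoping the one-step identity for $D^{\Tore}(\cdot,0)$, extracting the factor $\varepsilon$ from the jump event (the paper phrases it directly as the horizontal edge $(i,0)\to(i+1,0)$ being closed, independently of the past, which is exactly your factorization), and concluding by the Markov chain ergodic theorem. The delicate point you flag is handled the same way in the paper's proof of Proposition~\ref{Prop:ChaineMarkov}.
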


\begin{proof}[Proof of Proposition \ref{Coro:ConstanteTore}]
We have seen in the proof of Proposition  \ref{Prop:ChaineMarkov} that
\begin{equation*}
D^{\Tore}(i+1,0)=D^{\Tore}(i,0)+1,
\end{equation*}
unless a particle has moved at time $i$ from position $0$ to $1$, in which case $D^{\Tore}(i+1,0)=D^{\Tore}(i,0)+3$. This shows that
\begin{equation*}
\E(D^{\Tore}(i+1,0))=\E(D^{\Tore}(i,0))+1+2\P\set{Y^{0}_i=\bullet,Y^{1}_i=\circ, \{\mbox{$(i,0)\to(i+1,0)$ is closed}\}},
\end{equation*}
thus
\begin{equation*}
\E(D^{\Tore}(n,0))=n+2\varepsilon\sum_{j=0}^{n-1}\P\set{Y^{0}_j=\bullet,Y^{1}_j=\circ},
\end{equation*}
which gives, by the Markov chain ergodic theorem, the proof of the proposition.
\end{proof}

We thus need an estimate of $\nu_{K,\varepsilon}(\bullet,\circ)$. It turns out that the stationnary measure of the synchronous TASEP was studied in detail by Evans, Rajewsky and Speer \cite{ERS} using a matrix ansatz.

\begin{prop}\label{Prop:UnQuart}
The following identities relative to $\nu_{K,\varepsilon}(\bullet,\circ)$ hold:
\begin{itemize}
\item[\emph{(i)}] For all $K,\eps$,
\begin{equation*}
 \nu_{K,\varepsilon}(\bullet,\circ)= {A_\varepsilon(K)\over \varepsilon A_\varepsilon(K)+A_\varepsilon(K+1)},
\end{equation*}
with $A_\varepsilon(K)={1\over K}\sum_{k=1}^K\binom{K}{k}\binom{K}{k+1}(1-\varepsilon)^k$.
\item[\emph{(ii)}] \begin{equation*}
\lim_{K\rightarrow \infty} \nu_{K,\varepsilon}(\bullet,\circ)=\frac{1-\sqrt{1-\varepsilon}}{2\varepsilon}.
\end{equation*}
\item[\emph{(iii)}] For all $\alpha>0$,
$$
\lim_{\varepsilon\rightarrow 0} \nu_{\varepsilon^{-\alpha},\varepsilon}(\bullet,\circ)=\frac{1}{4}.
$$
\end{itemize}
\end{prop}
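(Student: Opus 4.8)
The plan is to rely on the exact result of Evans, Rajewsky and Speer \cite{ERS}, who computed via a matrix ansatz the stationary measure of the synchronous TASEP with these boundary rates. For part \emph{(i)}, I would first recall from \cite{ERS} the matrix-product form of $\nu_{K,\varepsilon}$: there exist matrices $D,E$ and vectors $\langle W|,|V\rangle$ (depending on $\varepsilon$) such that the stationary weight of a configuration $\mathbf{y}$ is $\langle W|\prod_{j=-K+1}^{K}(y^j D+(1-y^j)E)|V\rangle$, normalized by $Z_K=\langle W|(D+E)^{2K}|V\rangle$. Then $\nu_{K,\varepsilon}(\bullet,\circ)=\langle W|(D+E)^{K-1}DE(D+E)^{K-1}|V\rangle/Z_K$. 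The point is that for the synchronous TASEP with jump rate $\varepsilon$ and boundary rates $\varepsilon$, the relevant combinatorial sums collapse: $\langle W|(D+E)^m|V\rangle$ is (up to normalization) a known ballot-type number, and the two-point correlator $DE$ inserted in the middle factors through $A_\varepsilon$. Concretely I would verify that $\langle W|(D+E)^{2K}|V\rangle = \varepsilon A_\varepsilon(K)+A_\varepsilon(K+1)$ and the numerator equals $A_\varepsilon(K)$ with $A_\varepsilon(K)=\frac1K\sum_{k=1}^K\binom{K}{k}\binom{K}{k+1}(1-\varepsilon)^k$, matching the Narayana-number generating identity that appears in \cite{ERS}. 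The main obstacle here is purely bookkeeping: translating the particular normalization/convention of \cite{ERS} (which may use parameters $p$, $\tilde\alpha$, $\tilde\beta$ etc.) into our $\varepsilon$, and isolating the single correlator $\nu_{K,\varepsilon}(y^0=\bullet,y^1=\circ)$ from their general multi-point formulas.

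For part \emph{(ii)}, I would take $K\to\infty$ in the formula from \emph{(i)}. Writing $a_k(K)=\binom{K}{k}\binom{K}{k+1}(1-\varepsilon)^k$, a standard saddle-point / ratio analysis of $A_\varepsilon(K)$ shows that $A_\varepsilon(K+1)/A_\varepsilon(K)$ converges to a finite limit $\rho(\varepsilon)$ as $K\to\infty$; indeed the generating function $\sum_K A_\varepsilon(K)x^K$ is algebraic (a Catalan-type function in the variable $(1-\varepsilon)x$), so its radius of convergence gives $\rho(\varepsilon)=\big(\tfrac{1}{2}+\sqrt{\tfrac14-\cdots}\big)$-type expression; carrying the algebra through yields $\rho(\varepsilon)=\frac{(1+\sqrt{1-\varepsilon})^2}{2}$ after simplification. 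Substituting into $\nu_{K,\varepsilon}(\bullet,\circ)=\big(\varepsilon+\rho(\varepsilon)\big)^{-1}$ and simplifying with $\varepsilon = 1-(1-\varepsilon)$ gives exactly $\frac{1-\sqrt{1-\varepsilon}}{2\varepsilon}$. I would double-check this by noting it is consistent with the known infinite-volume current of the synchronous TASEP. The delicate point is justifying the interchange of limits and confirming that $A_\varepsilon(K)>0$ (so no division by zero), which is immediate since every term is positive for $\varepsilon\in(0,1)$.

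For part \emph{(iii)}, I would combine \emph{(i)} with a quantitative version of the convergence in \emph{(ii)}: I need that for $K=\varepsilon^{-\alpha}$ (which tends to $\infty$ as $\varepsilon\to 0$), the finite-$K$ value is already close to its infinite-volume limit. From \emph{(ii)}, $\lim_{K\to\infty}\nu_{K,\varepsilon}(\bullet,\circ)=\frac{1-\sqrt{1-\varepsilon}}{2\varepsilon}\to\frac14$ as $\varepsilon\to0$; so it suffices to show $\nu_{\varepsilon^{-\alpha},\varepsilon}(\bullet,\circ)-\frac{1-\sqrt{1-\varepsilon}}{2\varepsilon}\to 0$. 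Since the only $K$-dependence in \emph{(i)} is through the ratio $A_\varepsilon(K+1)/A_\varepsilon(K)$, I would estimate the speed at which this ratio approaches $\rho(\varepsilon)$: the subleading correction to $A_\varepsilon(K+1)/A_\varepsilon(K)$ is of order $1/K$ uniformly for $\varepsilon$ bounded away from $1$ (from the algebraic singularity structure, the correction is $O(1/K)$ with a constant that stays bounded as $\varepsilon\to0$ because the singularity of the generating function in $(1-\varepsilon)x$ stays at distance $\asymp 1$ from the origin). Hence the error is $O(\varepsilon^{\alpha})\to0$, giving the claim. The main obstacle in \emph{(iii)} is making the $O(1/K)$ bound uniform in $\varepsilon$ near $0$; I expect this to follow from an explicit comparison $\big|A_\varepsilon(K+1)/A_\varepsilon(K)-\rho(\varepsilon)\big|\le C/K$ obtained by bounding the tail of the Narayana sum, but it is the one place where a genuine (if short) estimate, rather than a citation, is needed.
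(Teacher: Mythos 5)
Your overall strategy is sound, but the two proofs distribute the work differently, and your version contains one concrete algebra error and leaves the genuinely hard estimate unproved. For \emph{(i)} you and the paper do the same thing: invoke the matrix-ansatz computation of Evans, Rajewsky and Speer. For \emph{(ii)} the paper again simply cites \cite{ERS}, whereas you rederive the limit from \emph{(i)} via $\rho(\varepsilon)=\lim_{K}A_\varepsilon(K+1)/A_\varepsilon(K)$; this is a legitimate alternative route, but your value $\rho(\varepsilon)=\frac{(1+\sqrt{1-\varepsilon})^2}{2}$ is off by a factor of $2$. The identity $\frac{1}{\varepsilon+\rho(\varepsilon)}=\frac{1-\sqrt{1-\varepsilon}}{2\varepsilon}$ forces
\begin{equation*}
\rho(\varepsilon)=\frac{2\varepsilon}{1-\sqrt{1-\varepsilon}}-\varepsilon=2\bigl(1+\sqrt{1-\varepsilon}\bigr)-\varepsilon=\bigl(1+\sqrt{1-\varepsilon}\bigr)^2,
\end{equation*}
consistent with $\rho(0)=4$; with your value the limit in \emph{(iii)} would come out as $1/2$ rather than $1/4$, contradicting your own conclusion.

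For \emph{(iii)}, both you and the paper reduce the problem to showing $A_\varepsilon(\varepsilon^{-\alpha}+1)/A_\varepsilon(\varepsilon^{-\alpha})\to 4$, and this is precisely where the paper's proof does its real work: it exploits the unimodality of $a(K,k)=\binom{K}{k}\binom{K}{k+1}(1-\varepsilon)^k$, shows that the sum $KA_\varepsilon(K)$ is dominated by the terms with $|k-K/2|\le K\varepsilon^{\beta}$ as soon as $K\gg\varepsilon^{-2\beta}$, and observes that there $a(K+1,k)/a(K,k)=(K+1)^2/((K-k)(K-k+1))=4(1+O(\varepsilon^{\beta}))$, concluding with the choice $\beta=\min\{1,\alpha/4\}$. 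You instead assert a uniform-in-$\varepsilon$ bound $\bigl|A_\varepsilon(K+1)/A_\varepsilon(K)-\rho(\varepsilon)\bigr|\le C/K$ coming from singularity analysis. Such a bound (even $O(K^{-1/2})$ would suffice) is plausible and stronger than what the paper establishes, but you give no argument for the uniformity as $\varepsilon\to0$, and you yourself flag it as the one place where a genuine estimate is needed. As written this step is a gap: the joint limit $K=\varepsilon^{-\alpha}$, $\varepsilon\to0$ is exactly what must be controlled, and neither the pointwise-in-$\varepsilon$ convergence of \emph{(ii)} nor a citation supplies it; you would need to carry out either a uniform singularity analysis or an elementary concentration argument on the Narayana sum of the kind the paper performs.
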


\begin{proof}
The first two assertions are consequences of (4.24), (8.21) and (10.13) of \cite{ERS}.
For (iii), we are led to prove that $\lim_{\varepsilon\rightarrow 0} {A_\varepsilon(\varepsilon^{-\alpha}+1)\over A_\varepsilon(\epsilon^{-\alpha})}=4$.

Denote $$a(K,k)\defeq\binom{K}{k}\binom{K}{k+1}(1-\varepsilon)^k.$$
The ratio
\begin{equation}{a(K,k+1)\over a(K,k)}=(1-\varepsilon){(K-k)(K-k-1)\over (k+1)(k+2)}\label{Ratio:a}\end{equation}
 is asymptotically equal to $(1-\varepsilon)({K-k\over k})^2$ for large values of $k$ and $K-k$.
 Therefore the sequence  $(a(K,k))_{1\leq k\leq K}$ increases from 1 to some $k_{max}(K)$ and decreases after.
Moreover, $k_{max}(K)\sim K({1\over2}-{\varepsilon\over8})$.

For $K>0$ and $\beta\in(0,1)$, let us decompose
the sum $K A_\varepsilon(K)$ into
$$\sum_{k=1}^{K({1\over2}-\varepsilon^\beta)-1}a(K,k)+\sum_{k=K({1\over2}-\varepsilon^\beta)}^{K\over 2} a(K,k)+\sum_{k={K\over2}+1}^{K({1\over2}+\varepsilon^\beta)}a(K,k)+\sum_{k=K({1\over2}+\varepsilon^\beta)+1}^Ka(K,k)$$
and denote by $A^-_{\beta}(K)$, $B^-_{\beta}(K)$, $B^+_{\beta}(K)$, $A^+_{\beta}(K)$, the four successive sums.
Note first, from the analysis of the ratio \eqref{Ratio:a},  that $A^-_{\beta}(K)$ and $A^+_{\beta}(K)$ are both subgeometric sums with rate $1- C\varepsilon^\beta$. This implies,
$$A^+_{\beta}(K)\leq C \varepsilon^{-\beta} a(K, K({1\over2}+\varepsilon^\beta))$$
$$A^-_{\beta}(K)\leq C \varepsilon^{-\beta} a(K, K({1\over2}-\varepsilon^\beta)).$$
In the same time, from the variations of the sequence $(a(K,k))_{1\leq k\leq K}$, we deduce
$$B^+_{\beta}(K)\geq K \varepsilon^\beta a(K, K({1\over2}+\varepsilon^\beta))$$
$$B^-_{\beta}(K)\geq K \varepsilon^\beta a(K, K({1\over2}-\varepsilon^\beta)).$$
 Therefore, as soon as the $K\gg \eps^{-2\beta}$, the ratio ${K A_\varepsilon(K)\over B^-_{\beta}(K)+B^+_{\beta}(K)}$ goes to 1.

 Now, for all $k$,
 $$ {a(K+1,k)\over a(K,k)}={(K+1)^2\over(K-k)(K-k+1)}.$$
 This ratio is between $4(1-c\varepsilon^\beta)$ and $4(1+C\varepsilon^\beta)$ when $k$ is between $K({1\over2}-\varepsilon^\beta)$ and $K({1\over2}+\varepsilon^\beta)$.
  This implies that, for all $\beta>0$,   the ratio ${ B^-_{\beta}(K+1)+B^+_{\beta}(K+1)\over B^-_{\beta}(K)+B^+_{\beta}(K)}$ converges to 4 as $\eps$ goes to 0 and $K$ goes to $\infty$.
Consequently, the same occurs also for ${(K+1) A_\varepsilon(K+1)\over K A_\varepsilon(K)}$, as soon as $K\gg \eps^{-2\beta}$.
We choose $\beta=\min\set{1, {\alpha\over4} }$ to prove the result.
\end{proof}

We will need further the following bounds on $\E(D^{\Tore}(n,0))$.
\begin{prop}\label{vitesseconv}
For $n\ge 0$, we have
\begin{equation*}
n(1+2\varepsilon\nu_{K,\varepsilon}(\bullet,\circ)) \le \E(D^{\Tore}(n,0))\le n(1+2\varepsilon\nu_{K,\varepsilon}(\bullet,\circ))+2K.
\end{equation*}
\end{prop}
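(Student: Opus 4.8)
The plan is to track the quantity $\E(D^{\Tore}(n,0))$ step by step, exactly as in the proof of Proposition \ref{Coro:ConstanteTore}, but without passing to the limit. Recall from there the exact identity
\begin{equation*}
\E(D^{\Tore}(n,0))=n+2\varepsilon\sum_{j=0}^{n-1}\P\set{Y_j^{0}=\bullet,Y_j^{1}=\circ}.
\end{equation*}
So the whole proposition reduces to controlling $\sum_{j=0}^{n-1}p_j$, where $p_j\defeq\P\set{Y_j^0=\bullet,Y_j^1=\circ}$, between $n\,\nu_{K,\varepsilon}(\bullet,\circ)$ and $n\,\nu_{K,\varepsilon}(\bullet,\circ)+K/\varepsilon$; multiplying by $2\varepsilon$ then gives the stated bounds with additive error $2K$.

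First I would fix the initial law of $\mathbf{Y}_0$. At $i=0$ the distances are $D^{\Tore}(0,j)=|j|$ (every vertical edge is open and the shortest way to $(0,j)$ is to walk straight up or down), so $Y_0^j=\bullet$ for $j\le 0$ and $Y_0^j=\circ$ for $j\ge 1$. In particular $Y_0^0=\bullet,Y_0^1=\circ$, i.e. $p_0=1$, which already beats the stationary value. The key monotonicity input I would then invoke is the standard coupling/attractiveness of TASEP: the configuration started from the ``step'' profile (all particles to the left of a given edge) dominates — in the usual partial order where particles are pushed to the left — every configuration, and in particular one can couple $\mathbf{Y}$ started from this deterministic profile with $\mathbf{Y}$ started from the stationary measure $\nu_{K,\varepsilon}$ so that the stationary copy is always ``below''. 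Under this coupling the local event $\set{Y_j^0=\bullet,Y_j^1=\circ}$ for the step-profile chain contains the corresponding event for the stationary chain, which yields $p_j\ge\nu_{K,\varepsilon}(\bullet,\circ)$ for every $j$, hence the lower bound $\sum_{j=0}^{n-1}p_j\ge n\,\nu_{K,\varepsilon}(\bullet,\circ)$ and thus $\E(D^{\Tore}(n,0))\ge n(1+2\varepsilon\nu_{K,\varepsilon}(\bullet,\circ))$.

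For the upper bound I would argue that the excess $\sum_{j=0}^{n-1}\bigl(p_j-\nu_{K,\varepsilon}(\bullet,\circ)\bigr)$ is bounded by a quantity not depending on $n$. The clean way is a conservation (mass-transport) argument: let $N_i=\sum_{j=-K+1}^{K}\un_{\set{Y_i^j=\bullet}}$ be the number of particles. From the TASEP dynamics, $\E(N_{i+1})-\E(N_i)=\beta\,\P\set{Y_i^{-K+1}=\circ}-\gamma\,\P\set{Y_i^{K}=\bullet}$, and in stationarity both sides vanish. Now compare $D^{\Tore}(n,-K)$ (reached by going straight down then horizontally/diagonally) with $D^{\Tore}(n,0)$: one has $D^{\Tore}(n,0)\le D^{\Tore}(n,-K)+K$ trivially (climb $K$ vertical edges), and $D^{\Tore}(n,-K)$ increases by exactly $1$ at each step unless a particle enters at the bottom boundary, so $\E(D^{\Tore}(n,-K))=n+ (\text{bottom entry term})$. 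Matching the telescoping sums for the two base points and using that the ``surplus'' events at an interior edge versus at the boundary differ by a bounded (in $n$) amount — concretely, $\bigl|\sum_{j<n}(p_j-\nu_{K,\varepsilon}(\bullet,\circ))\bigr|$ is dominated by the total variation distance of $\mathbf{Y}_j$ to $\nu_{K,\varepsilon}$ summed over $j$, which by geometric ergodicity of this finite irreducible aperiodic chain is $\le C(K,\varepsilon)$ — gives $\sum_{j=0}^{n-1}p_j\le n\,\nu_{K,\varepsilon}(\bullet,\circ)+K/\varepsilon$, i.e. the claimed $+2K$ after multiplication by $2\varepsilon$.

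The main obstacle is getting the $n$-independent error to be exactly $2K$ (and not merely $O_{K,\varepsilon}(1)$). A soft ergodicity bound gives some constant $C(K,\varepsilon)$ but not the clean value $K/\varepsilon$ for $\sum_j(p_j-\nu_{K,\varepsilon}(\bullet,\circ))$. I expect the right route is the base-point comparison above: since $|D^{\Tore}(i,j)-D^{\Tore}(i,0)|\le K$ for all $i$ and all $j\in\IntG -K,K\IntD$ (one vertical unit per level), the difference $\E(D^{\Tore}(n,0))-\E(D^{\Tore}(n,j))$ stays in $[-K,K]$ for every $n$; averaging the telescoped identity $\E(D^{\Tore}(n,0))=n+2\varepsilon\sum_{j<n}p_j$ against the corresponding identity for a column where the analogous surplus sum already telescopes to the stationary value (e.g. comparing with the time-stationary chain, for which the surplus probability is constant equal to $\nu_{K,\varepsilon}(\bullet,\circ)$ at every step and the metric grows at the deterministic linear rate $1+2\varepsilon\nu_{K,\varepsilon}(\bullet,\circ)$ up to a bounded column-dependent offset) forces $2\varepsilon\sum_{j<n}p_j - 2\varepsilon n\,\nu_{K,\varepsilon}(\bullet,\circ)\in[0,2K]$. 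That offset bookkeeping — verifying the constant is $\le 2K$ rather than just finite — is the only genuinely delicate point; everything else is the exact identity from Proposition \ref{Coro:ConstanteTore} together with TASEP attractiveness and the trivial vertical Lipschitz bound on $D^{\Tore}$.
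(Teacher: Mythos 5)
Your reduction to the exact identity $\E(D^{\Tore}(n,0))=n+2\varepsilon\sum_{j=0}^{n-1}p_j$ with $p_j=\P\{Y_j^0=\bullet,Y_j^1=\circ\}$ is the same starting point as the paper, and your identification of the initial condition as the step profile is correct. But your lower bound has a genuine gap: the event $\{Y^0=\bullet,Y^1=\circ\}$ is \emph{not} monotone in the particle configuration (it asks for a particle at $0$ \emph{and} a hole at $1$), so even granting an attractive coupling in which the step-initial chain dominates the stationary chain site by site, the claimed containment of events fails --- for instance a configuration with particles at both $0$ and $1$ dominates one with a particle at $0$ and a hole at $1$, yet only the latter realizes the event. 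What you would actually need is the statement that the step initial condition maximizes the expected current across the bond $(0,1)$ over all initial laws; that is true but nontrivial, and you neither prove nor cite it. The paper sidesteps all of this: $\E(D^{\Tore}(n,0))$ is subadditive in $n$ by translation invariance, so $\E(D^{\Tore}(n,0))/n\ge\lim_m\E(D^{\Tore}(m,0))/m=1+2\varepsilon\nu_{K,\varepsilon}(\bullet,\circ)$ by Fekete's lemma combined with Proposition \ref{Coro:ConstanteTore}. That is the entire lower bound.

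For the upper bound, your final paragraph points in the right direction (compare with the chain started from $\nu_{K,\varepsilon}$, for which the same identity gives exactly $n(1+2\varepsilon\nu_{K,\varepsilon}(\bullet,\circ))$, and use the vertical Lipschitz property of $D^{\Tore}$), but you never close the argument: you explicitly leave the ``offset bookkeeping'' unverified, and the routes you do spell out (summed total-variation distance to stationarity, geometric ergodicity of the finite chain) only yield an error $C(K,\varepsilon)$ with no control on its size --- which is not enough for the application in Section 5, where $K=\varepsilon^{-1/4}$ and the additive error must be $o(n\varepsilon)$. The paper obtains exactly $2K$ by a pathwise coupling rather than a comparison of probabilities: run the stationary-started metric $\tilde D^{\Tore}$ on the \emph{same} percolation configuration, observe that its optimal path to $(n,0)$ meets column $0$ at some random $(0,J)$, so that $\tilde D^{\Tore}(n,0)=\tilde D^{\Tore}(0,J)+D^{\Tore}((0,J)\to(n,0))$, and then $D^{\Tore}(n,0)\le K+D^{\Tore}((0,J)\to(n,0))\le 2K+\tilde D^{\Tore}(n,0)$, since $|J|\le K$ and $\tilde D^{\Tore}(0,J)\ge -K$. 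Taking expectations finishes the proof. As written, your argument is not complete: the lower bound rests on a false monotonicity claim, and the upper bound's only quantitatively sufficient step is the one you acknowledge you have not carried out.
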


\begin{proof}
By subadditivity of $(\E(D^{\Tore}(n,0)))_{n\ge 0}$, the sequence $\E(D^{\Tore}(n,0))/n$ is decreasing. Together with Proposition
\ref{Coro:ConstanteTore}, this proves the left inequality.

For the right inequality, the idea is to start the Markov chain $(\mathbf{Y}_i)_{i\ge 0}$ from its stationary distribution.
Let $\tilde{\mathbf{Y}}_0=(\tilde{Y}_0^j,j\in\IntG -K+1,K\IntD)$ with law $\nu_{K,\varepsilon}$. Take $\tilde{D}^{\Tore}(0,0)=0$ and define inductively $\tilde{D}^{\Tore}(0,j)$ for each $j$ such that
\begin{equation*}\tilde{D}^{\Tore}(0,j) =
\begin{cases}
&\tilde D^{\Tore}(0,j-1)-1  \text{ if } \tilde{Y}_0^j=\bullet.\\
&\tilde D^{\Tore}(0,j-1)+1  \text{ if } \tilde{Y}_0^j=\circ.\\
\end{cases}
\end{equation*}
If $(\tilde{\mathbf{D}}^{\Tore}_i)$ is a realization of the chain starting from $\tilde{D}^{\Tore}_0$ then, by Proposition \ref{Prop:ChaineMarkov}, we have
\begin{equation*}
\E(\tilde{D}^{\Tore}(n,0))= n(1+2\varepsilon\nu_{K,\varepsilon}(\bullet,\circ)).
\end{equation*}
By construction of the chain, there is a (random) $J$ such that
\begin{equation*}
\tilde{D}^{\Tore}(n,0)= D^{\Tore}\left((0,J)\to (n,0)\right)+ \tilde{D}^{\Tore}(0,J),
\end{equation*}
where $D^{\Tore}$ is, as before, the true distance after percolation in $\Bande$.
Using triangular inequality, we get
\begin{eqnarray*}
D^{\Tore}(n,0)&\le &K+D^{\Tore}\left((0,J)\to (n,0)\right)\\
&\le & K+\tilde{D}^{\Tore}(n,0)-\tilde{D}^{\Tore}(0,J)\\
&\le & 2K+\tilde{D}^{\Tore}(n,0).
\end{eqnarray*}
Taking expectation gives the expected bound.
\end{proof}

If we gather the results obtained so far in this section, we roughly obtain that for the Cross Model,
\begin{equation*}
\E(D^{K,d}(n,0)) \approx n(1+2\varepsilon\nu_{K,\varepsilon}(\bullet,\circ)) \approx n\left(1+\frac{\eps}{2}\right).
\end{equation*}
In order to apply this result to usual percolation on strips (with horizontal and vertical edges open with probability $1-\eps$), we have to prove that distances in both models differ very little. Fortunately, it happens that distances in both models are quite similar as soon as there are no contiguous closed edges, which is the case with high probability in any fixed rectangle, when $\eps$ is small.

We first choose in the Cross Model a particular path among all the minimal paths:

\begin{lem}\label{Rem:ModificationChemin}
For the Cross Model, for each $K,n$, there exists a path from $(0,0)$ to $(n,0)$ of minimal length that only goes through horizontal and diagonal edges, except possibly through the vertical edges of the first column $\set{0}\times\IntG -K,K\IntD$.
\end{lem}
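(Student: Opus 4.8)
The plan is to start from an arbitrary shortest path $\pi$ from $(0,0)$ to $(n,0)$ in the Cross Model and perform local surgeries that replace forbidden moves (vertical edges outside the first column, and leftward moves) without increasing the length, until the path has the desired form. Recall from the proof of Proposition~\ref{Prop:ChaineMarkov} that, since all vertical edges are open, a shortest path never makes a step from right to left; so I may already assume $\pi$ is monotone non-decreasing in the first coordinate. Thus $\pi$ can be decomposed column by column: between abscissa $i$ and $i+1$ it first does some number of vertical moves within column $i$, then crosses to column $i+1$ via either a horizontal edge or one of the two diagonal edges. The goal is to push all the vertical activity into column $0$.

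The key step is a single-column elimination lemma: if $\pi$ uses vertical edges in some column $i\ge 1$, I can modify it on columns $i-1$ and $i$ (and only there) to remove those vertical moves at no extra cost. Concretely, suppose $\pi$ enters column $i$ at height $a$ (coming from column $i-1$), then moves vertically to height $b$, then leaves column $i$ to column $i+1$ arriving at height $c\in\{b-1,b,b+1\}$. The portion of $\pi$ inside columns $i-1\to i\to i+1$ has length $|b-a|+\ell_1+\ell_2$ where $\ell_1\in\{1,2\}$ is the cost of the $(i-1)\to i$ crossing and $\ell_2\in\{1,2\}$ the cost of the $i\to i+1$ crossing. I claim one can reach $(i+1,c)$ from $(i-1,a)$ by a path doing its vertical moves only in column $i-1$, then a diagonal or horizontal crossing into column $i$, then a diagonal or horizontal crossing into column $i+1$, of length $\le |b-a|+\ell_1+\ell_2$. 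This is a finite case check: diagonals have length $2$ and change height by $\pm1$, horizontals have length $1$ and change height by $0$, and the availability of the horizontal edge $(i-1,h)\to(i,h)$ only helps (diagonals are always open, so we can always fall back on them). The worst cases are when $\ell_1$ or $\ell_2$ equals $1$ (an open horizontal edge was used), but one checks that shifting the vertical displacement one column to the left and then using diagonals preserves or decreases the total. Iterating this from left to right (or from the rightmost offending column inward) clears every column $i\ge 1$ of vertical edges; since each surgery is confined to two adjacent columns and does not create new vertical edges to the right of where we are working, the process terminates and yields a path of the same minimal length using only horizontal and diagonal edges except on $\set{0}\times\IntG -K,K\IntD$.

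The main obstacle is making the single-column surgery genuinely length-non-increasing in \emph{all} parity and boundary situations — in particular when the path bounces against the top or bottom of the strip (height $\pm K$), where a vertical move might be forced, and when the crossing into or out of column $i$ was done cheaply via an open horizontal edge whose analogue one column over may be closed. In the latter case one must argue that replacing a length-$1$ horizontal crossing plus a vertical move by a length-$2$ diagonal crossing does not lose: indeed a diagonal absorbs one unit of the vertical displacement ($|b-a|$ decreases by $1$) while costing only $1$ more than the horizontal, so the bookkeeping balances, and when $|b-a|=0$ there were no vertical moves in column $i$ to begin with. I would organize this as a short table of the possible $(\ell_1,\ell_2,\mathrm{sign}(b-a))$ patterns and exhibit the replacement path in each, deferring the boundary cases by noting that a shortest path never needs to touch height $\pm K$ unless $|j|$ forces it, and when it does, the same diagonal-versus-vertical trade still applies.
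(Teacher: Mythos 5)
Your proposal is correct and follows essentially the same strategy as the paper's proof: iterated local, length-preserving substitutions that exploit the always-open diagonals (cost $2=1+1$) to trade a vertical edge plus an adjacent crossing for a diagonal crossing, pushing all vertical edges leftward until only column $0$ may contain them, with termination guaranteed (as you note in your parenthetical --- the ``rightmost offending column inward'' order is the one that works in a single pass) because no new vertical edges appear to the right of the column being processed. The paper merely works at a finer granularity, removing or shifting one vertical edge at a time (the rightmost, then lowest one) with three pictured cases according to the edge preceding it, whereas you clear a whole column per step; both case analyses go through, and the boundary rows $\pm K$ cause no trouble since every replacement path stays within the range of heights already visited by the original path.
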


\begin{proof}
Consider one of the minimal paths from $(0,0)$ to $(n,0)$, we will modify this path $\mathcal{P}$ to get one which satisfies the desired property. Among all the vertical edges of $\mathcal{P}$, let $\mathcal{E}=(i,j-1)\to (i,j)$ be the one for which
\begin{enumerate}
\item $i$ is maximal,
\item among them, $j$ is minimal.
\end{enumerate}
In this way, there is no vertical edges neither to the right of $\mathcal{E}$ nor below it. There are three cases according to whether the other edge joining $(i,j-1)$ in $\mathcal{P}$ goes to $(i-1,j-1)$, to $(i-1,j)$ or to $(i-1,j-2)$. In each of these three cases we can do a substitution that removes the vertical edge or moves it  to the left:
\begin{center}
\includegraphics[width=130mm]{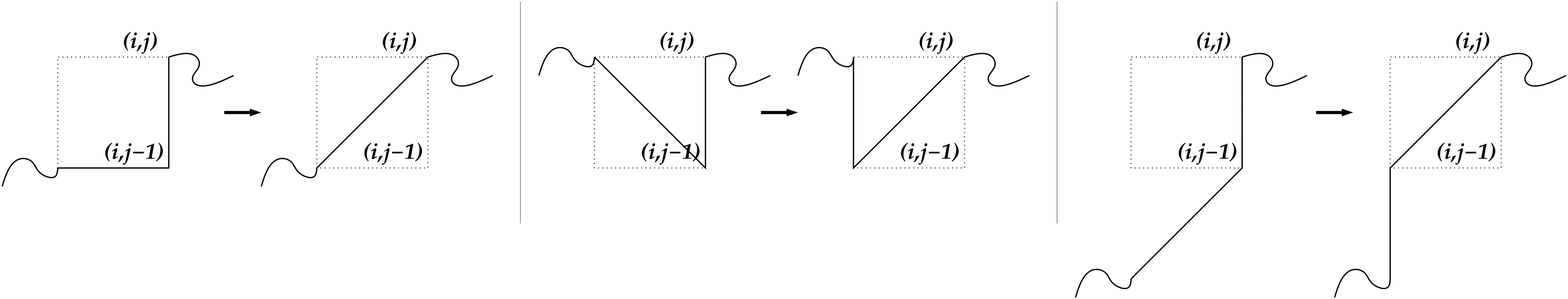}
\end{center}

This substitution does not change the length of the path, it is thus still optimal. After iterating the process, there may remain vertical edges 
only on the first column.
\end{proof}


\begin{prop}\label{Prop:EchelleConditionnee} Consider a standard percolation on $\Bande$ where each horizontal and vertical edge is open with probability $1-\eps$. Denote $D^K$ the associated distance.
Let $A=A(K,n,\eps)$ be the event "in each square of area 1 of $\IntG 0;n\IntD\times \IntG -K;K\IntD$ \emph{i.e.} of vertices $\{(i,j),(i,j+1), (i+1,j+1), (i,j+1)\}$, at most one edge is closed". Then, for each $n,K$,
\begin{equation*}
\E(D^K(n,0)\;|\;A )\leq \E(D^{\Tore}(n,0))+3K.
\end{equation*}
\end{prop}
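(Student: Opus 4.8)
The plan is to couple the standard percolation on $\Bande$ with a Cross Model configuration on the same strip, conditionally on the event $A$, and then compare the two distances path by path. I would realize both models on the same probability space as follows: in the Cross Model, diagonal and vertical edges are deterministically open and each horizontal edge is open with probability $1-\eps$; I couple the horizontal edges of the Cross Model with the horizontal edges of the standard percolation so that they coincide. The vertical edges of the standard percolation are independent (but conditioned so that $A$ holds). The strategy is to take the particular minimal path $\mathcal{P}$ in the Cross Model furnished by Lemma \ref{Rem:ModificationChemin} — which uses only horizontal and diagonal edges, except possibly the vertical edges in the first column $\set{0}\times\IntG -K,K\IntD$ — and to turn it into an \emph{open} path in the standard percolation, of length at most $D^{\Tore}(n,0)+3K$ on the event $A$.

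The key step is the local substitution that replaces each diagonal edge of $\mathcal{P}$ by a detour through the standard lattice. A diagonal edge $(i,j)\to(i+1,j+1)$ has length $2$ in the Cross Model; in the standard model I would try to realize the same displacement by going horizontally then vertically (or vertically then horizontally), using two edges forming an L around the corresponding unit square. Here is where the event $A$ enters: in each unit square at most one edge is closed, so at least one of the two L-shaped detours around that square is entirely open; choosing that one gives an open path of length $2$ in the standard model, matching the length of the diagonal edge. Horizontal edges of $\mathcal{P}$ are already horizontal edges of the standard model and are open by the coupling. Concatenating all these pieces, I obtain an open path in the standard percolation from $(0,0)$ to $(n,0)$ whose length equals $D^{\Tore}(n,0)$, \emph{except} that I still must account for the vertical edges of $\mathcal{P}$ in the first column, of which there are at most $2K$ — but these are part of the Cross-Model length already, so they contribute nothing extra; one does need to check that these first-column vertical edges of the standard percolation are open, which may require a small extra detour, and this is where the additive $3K$ slack is spent (roughly $2K$ for the first column, $K$ for boundary effects of detours near $j=\pm K$).

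Once the pathwise bound $D^K(n,0)\le D^{\Tore}(n,0)+3K$ holds on $A$ in this coupling, I take conditional expectation given $A$ on the standard side; since the horizontal edges are coupled identically and the Cross Model has no other randomness, $\E(D^{\Tore}(n,0))$ is unaffected by the conditioning, and the inequality $\E(D^K(n,0)\mid A)\le \E(D^{\Tore}(n,0))+3K$ follows.

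The main obstacle I anticipate is bookkeeping near the top and bottom boundaries $j=\pm K$: a diagonal edge $(i,K-1)\to(i+1,K)$ (or its symmetric) needs an L-detour, and one of the two candidate detours may leave the strip, so only one orientation is available there, and if \emph{that} one happens to contain the unique closed edge of its square the argument stalls. Resolving this requires either a slightly more careful local analysis at the boundary (using that a diagonal can also be reached via a longer horizontal-vertical-horizontal detour costing at most one extra unit, absorbed into the $3K$ term) or an argument that $\mathcal{P}$ can be further chosen to avoid problematic boundary diagonals. The other point needing care is ensuring the detours of neighboring diagonal edges do not overlap in a way that creates length inefficiency — but since each detour is confined to the unit square of its diagonal, and consecutive edges of $\mathcal{P}$ share only endpoints, this should go through cleanly.
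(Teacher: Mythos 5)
Your pathwise comparison is essentially the paper's: take the minimal Cross Model path furnished by Lemma \ref{Rem:ModificationChemin}, keep its horizontal edges (open in both models under the coupling), replace each diagonal edge by one of the two L-shaped detours around its unit square (on $A$ at least one of them is open, at the same length $2$), and pay an additive $O(K)$ for the vertical edges of the first column. Your anticipated obstacle at $j=\pm K$ is not a real one: a diagonal edge of $\Bande$ exists only when the unit square having it as a diagonal lies entirely inside the strip, so both candidate L-detours are always available and $A$ guarantees that one of them is open; no extra boundary analysis or lengthened detour is needed.

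The genuine gap is your last step. You assert that $\E(D^{\Tore}(n,0))$ is ``unaffected by the conditioning'' on $A$ because the Cross Model's only randomness lies in the horizontal edges. But $A$ constrains precisely those horizontal edges (each unit square contains two of them), so conditioning on $A$ changes the law of the Cross Model configuration and hence of $D^{\Tore}(n,0)$; the equality you invoke is false. What you actually need is the inequality $\E(D^{\Tore}(n,0)\mid A)\le\E(D^{\Tore}(n,0))$, and it is true but requires an argument: $A$ is an increasing event and $-D^{\Tore}(n,0)$ is an increasing random variable (opening edges can only shorten distances), so the FKG inequality gives $\E(-D^{\Tore}(n,0)\un_A)\ge\E(-D^{\Tore}(n,0))\,\P\{A\}$, which rearranges to the missing bound. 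This is exactly how the paper closes the proof; without it your argument only establishes $\E(D^K(n,0)\mid A)\le\E(D^{\Tore}(n,0)\mid A)+3K$.
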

\begin{proof}
On the event $A$, adding diagonal edges with length 2 does not decrease the length of optimal paths since either the path $(i,j)\rightarrow (i,j+1) \rightarrow (i+1,j+1)$  or $(i,j)\rightarrow (i+1,j) \rightarrow (i+1,j+1)$ is open and moreover, there is at most a distance 3 between the two extremities of any edge of $\IntG 0;n\IntD\times \IntG -K;K\IntD$.

Thanks to Lemma \ref{Rem:ModificationChemin}, there is always for the Cross Model a path of minimal length using only horizontal and diagonal edges except maybe on the at most $K$ vertical steps along the first column $\set{0}\times\IntG -K,K\IntD$. Thus, on the event $A$, we get
\begin{equation*}
D^K(n,0) \le D^{K,d}(n,0)+3K.
\end{equation*}
This yields
\begin{equation*}
\E(D^K(n,0)\;|\;A )\le\E(D^{K,d}(n,0)\;|\;A)+3K.
\end{equation*}
Let us note now that $A$ is an increasing event and $-D^{K,d}(n,0)$ is an increasing random variable.
Thus, the FKG inequality (see \cite{Grimmett}) yields
\begin{equation*}
\E(-D^{K,d}(n,0)\mathbf{1}_A )\ge \E(-D^{K,d}(n,0))\P\{A\}
\end{equation*}
which can be rewritten
\begin{equation*}
\E(D^{K,d}(n,0)\;|\;A )\le \E(D^{K,d}(n,0)).
\end{equation*}
\end{proof}

\section{The lower bound}
We now return to the original model and consider percolation on $\Z^2$ where each edge is closed independently
with probability $\varepsilon=1-p$. Recall that $D(i,j)$ denotes the distance between the origin and $(i,j)\in \Z^2$.
Using Proposition \ref{Prop:1ereBorne}, in order to prove the lower bound in Theorem \ref{main}, we just need to show that
\begin{equation*}\lim_{k\rightarrow \infty} \frac{1}{k} \E(D(T_1(k),0)\un_{\set{\Zero\leftrightarrow \infty}})\ge 1+\frac{\varepsilon}{2} + o(\varepsilon).
\end{equation*}
Let $D^d(x)$ be the distance between the origin and $x$ in $\Z^2$, with diagonal and vertical edges all open.
Adding edges decreases the distances: $D^d(x)$ is always smaller than $D(x)$.
Thus, we have,
\begin{eqnarray*}
\E(D(T_1(k),0)\un_{\{\Zero\leftrightarrow \infty\}})&\ge& \E(D^{d}(T_1(k),0)\un_{\{\Zero\leftrightarrow \infty\}})\\
&\ge&\E(D^{d}(k,0)\un_{\{\Zero\leftrightarrow \infty\}} ),
\end{eqnarray*}
since $T_1(k)\geq k$ and the sequence $(D^d(n,0))$ is increasing (thanks to vertical edges).

Note that every minimal path from the origin to $(k,0)$ (with diagonal and vertical edges open) has a length less than $2k$ and lies
in the strip $\Z_k$. Therefore
$$
\E(D^{d}(k,0)\un_{\{\Zero\leftrightarrow \infty\}})= \E(D^{k,d}(k,0)\un_{\{\Zero\leftrightarrow \infty\}}).
$$
Using Proposition \ref{vitesseconv}, we have
$\E(D^{k,d}(k,0))\ge k(1+2\varepsilon\nu_{k,\varepsilon}(\bullet,\circ))$.
Besides,
\begin{equation*}
\E(D^{k,d}(k,0)(1-\un_{\{\Zero\leftrightarrow \infty\}}))\le 2k\P\{0\nleftrightarrow \infty\}\le 4k\eps^4.
\end{equation*}
We get
\begin{equation*}\E(D(T_1(k),0)\un_{\{\Zero\leftrightarrow \infty\}})\ge k(1+2\varepsilon\nu_{k,\varepsilon}(\bullet,\circ)-4\eps^4).\end{equation*}
Letting $k$ tends to infinity, we obtain with Proposition \ref{Prop:UnQuart} the desired lower bound.

\section{The upper bound: a short path}

As in the previous section, we consider standard percolation on $\Z^2$ where each edge is closed with probability $\varepsilon=1-p$.
Recall that Proposition \ref{bornemu} states that for any $\delta>1$ there exists a $c_\delta>0$
such that, for all $\eps$ small enough and $n\ge 10$,
\begin{equation}\label{eqbornemu}
\mu_{p}\le \frac{\E(D(n,0)\infin)}{n}+c_\delta\varepsilon^2n^{\delta-1}.
\end{equation}
So to prove Theorem \ref{main}, it is sufficient to find  $n=n(\varepsilon)\ge 10$ such that $\varepsilon n^{\delta-1}=o(1)$ and
\begin{equation*}\E(D(n,0)\infin)\le n(1+\frac{\varepsilon}{2} + o(\varepsilon)).\end{equation*}
Fix $\ell>1$ and $K,n> 2\ell$ and   define the boxes $(C_i)_{i \ge 0}$ by
\begin{equation*}C_i=[\ell,n-\ell]\times[(2i-1)K+1,(2i+1)K].\end{equation*}
Let $E_i$ denote the event
"in each square of area 1 of $C_i$ \emph{i.e.} of vertices $\{(i,j),(i,j+1), (i+1,j+1), (i,j+1)\}$, at most one edge is closed".
Let us note that the events $(E_i)_{i\ge 0}$ are independent and have the same probability. Moreover, we have
\begin{align*}
\P\set{E_i^c}&\le |C_i|\P\{\mbox{more than $2$ edges are closed in a given square}\}\\
&\le 2Kn(6\varepsilon^2+4\eps^3+\eps^4)\le 22Kn\eps^2.
\end{align*}
Hence, if $I=\inf\{i\ge 0, E_i\}$, $I+1$ is geometrically distributed  with parameter $\P\{E_0\}$.

\begin{figure}[h!]
\label{Fig:BoitesEiAvecChemins}
\begin{center}
\includegraphics[width=110mm]{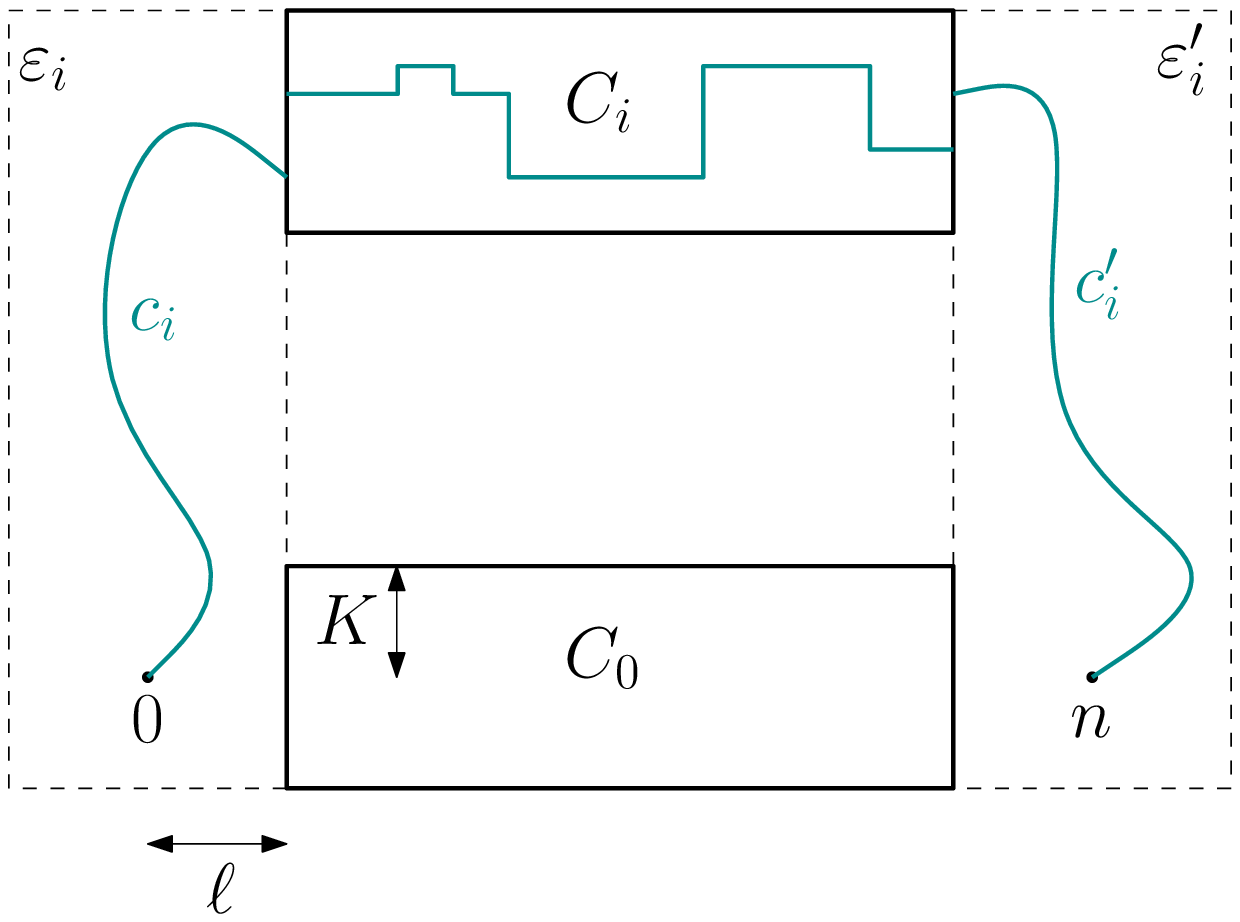}
\caption{The sketch of an almost optimal path, on the event $B_i\cap \set{I=i}$.}
\end{center}
\end{figure}

Let $c_i, c'_i$ denote the vertical boundaries of $C_i$ \emph{i.e.} $c_i\defeq\{\ell\}\times[(2i-1)K+1, (2i+1)K]$ and $c'_i\defeq\{n-\ell\}\times[(2i-1)K+1, (2i+1)K]$
and $\varepsilon_i,\varepsilon'_i$ the boxes defined by
\begin{align*}
\varepsilon_i &=[-\ell,\ell]\times [-K , (2i+1)K],\\
\varepsilon'_i &=[n-\ell,n+\ell]\times [-K , (2i+1)K].
\end{align*}
Let $B_i$ be the event
\begin{equation*}
B_i=\{\Zero\leftrightarrow c_i \mbox{ in } \varepsilon_i\}\cap \{(n,0)\leftrightarrow c'_i \mbox{ in } \varepsilon'_i\}.
\end{equation*}
We have
\begin{equation}\label{eq1}
\E(D(n,0)\infin)\le \sum_{i=0}^\infty \E(\un_{\{I=i\}\cap B_i}D(n,0))+\sum_{i=0}^\infty \E(D(n,0)\un_{\{I=i\}\cap B_i^c \cap \{\Zero\leftrightarrow (n,0)\leftrightarrow \infty\}}).
\end{equation}

Let $\chi_1(i)$ (resp. $\chi'_1(i)$) denote the distance between $\Zero$ and the segment $c_i$ inside the box $\varepsilon_i$ (resp. $(n,0)$ and the segment $c'_i$ inside the box $\varepsilon'_i$). Denote also $\chi_2(i)$ the distance between  $(\ell,2iK)$ and $(n-\ell,2iK)$ inside the box $C_i$.
We have
\begin{equation}\label{eqdecomp}
D(n,0)\le \chi_1(i)+\chi'_1(i)+\chi_2(i)+\max_{x\in c_i} D(x\to(\ell,2iK))+\max_{y\in c'_i} D(y \to(n-\ell,2iK)).
\end{equation}
Moreover, on the event  $\{I=i\}\cap B_i$, the r.h.s. of this inequality is finite and we have
\begin{equation}\label{eqchi1}
\chi_1(i)\le |\varepsilon_i|=4\ell(i+1)K \qquad \chi'_1(i)\le |\varepsilon'_i|=4\ell(i+1)K
\end{equation}
\begin{equation}\label{eqci}
\max_{x\in c_i}D(x\to(\ell,2iK))\le 3K \qquad \max_{y\in c'_i}D(y\to(n-\ell,2iK))\le 3K.
\end{equation}
To bound $\chi_2(i)$, let us note that $\chi_2(i)$ only depends on the edges inside the box $C_i$. Thus, we have
\begin{eqnarray*}
\E(\chi_2(i)\un_{\{I=i\}\cap B_i}) &\le& \E(\chi_2(i)\un_{\{I=i\}})\\
& = & \E(\chi_2(i)\;|\; {I=i})\P\{I=i\}\\
& = & \E(\chi_2(0)\;|\; {I=0})\P\{I=i\}.\\
\end{eqnarray*}
Moreover, the event $\{I=0\}$ coincides with the event $A(K,n-2\ell,\varepsilon)$ defined in Proposition \ref{Prop:EchelleConditionnee}.
This yields, using Proposition \ref{vitesseconv} and Proposition \ref{Prop:EchelleConditionnee},
\begin{equation}\label{eqchi2}
\E(\chi_2(0)\;|\; {I=0}) \le \E(D^{\Tore}(n-2\ell,0))+3K\le n(1+2\varepsilon\nu_{K,\varepsilon}(\bullet,\circ))+5K.
\end{equation}
Combining (\ref{eqdecomp}),(\ref{eqchi1}),(\ref{eqci}),(\ref{eqchi2}), we get
\begin{equation}\label{eqrhs1}
\sum_{i=0}^\infty \E(\un_{\{I=i\}\cap B_i}D(n,0))\le
 n(1+2\varepsilon\nu_{K,\varepsilon}(\bullet,\circ))+5K+ K(8\ell\E(I+1)+6).
 \end{equation}
For the second term in the right hand side of \eqref{eq1}, using H\"older inequality, we get
\begin{multline}\label{eqcheminlong}
\E(D(n,0)\un_{\{I=i\}\cap B^c_i\cap \{0\leftrightarrow (n,0)\leftrightarrow \infty\}})\\ \le \P\{B_i^c \cap \{I=i\} \cap \{\Zero\leftrightarrow (n,0)\leftrightarrow \infty\}\}^{1/2}\E(D^2(n,0)\infin)^{1/2}.
\end{multline}
Lemma \ref{roughbound} yields
$$\E(D^2(n,0)\infin)^{1/2}\le C_{\delta}n^{\delta}.$$
Besides, we have
\begin{eqnarray*}
\P\{B_i^c \cap \{I=i\} \cap \{\Zero\leftrightarrow (n,0)\leftrightarrow \infty\}\}&\le& 2\P\{\{0\nleftrightarrow c_i \mbox{ in } \varepsilon_i\}\cap\{0\leftrightarrow \partial \varepsilon_i\}\cap \{I=i\}\} \\
& = & 2\P\{\{0\nleftrightarrow c_i \mbox{ in } \varepsilon_i\}\cap\{0\leftrightarrow \partial \varepsilon_i\}\}\P \{I=i\},
\end{eqnarray*}
since the two first events only depend on the edges inside the box $\varepsilon_i$ and the event $\{I=i\}$ only depends on the edges inside the box $C_i$. If $\Zero\leftrightarrow \partial \varepsilon_i$ but is not connected to $c_i$ in the box $\eps_i$, then there exists in the dual graph a path of closed edges with at least $2\ell$ edges. Hence
\begin{equation*}
\P\{\{0\nleftrightarrow c_i \mbox{ in } \varepsilon_i\}\cap\{0\leftrightarrow \partial \varepsilon_i\}\}\le 2K(i+1)2\ell(3\varepsilon)^{2\ell}.
\end{equation*}
Plugging this into \eqref{eqcheminlong} gives
\begin{equation}\label{eqrhs2}
\sum_{i=0}^\infty \E(\un_{\{I=i\}}\un_{B_i^c}D(n,0)\infin)
\le (8\ell K)^{1/2}(3\varepsilon)^{\ell}C_\delta n^{\delta}\sum_{i=0}^\infty \P\{I=i\}^{1/2}(i+1)^{1/2}.
\end{equation}
Recall now that $I+1$ is a geometric random variable with parameter $\P\{E_0\}\ge 1-22Kn\varepsilon^2$. Thus, there exists $C<\infty$ such that, for any $K,n,\eps$ are such that
$22Kn\varepsilon^2\le \frac{1}{2}$, we have
\begin{equation*}\sum_{i=0}^\infty \P\{I=i\}^{1/2}(i+1)^{1/2}\le C \quad\mbox{ and } \quad\E(I)\le C.
\end{equation*}
Combining \eqref{eqbornemu}, \eqref{eq1}, \eqref{eqrhs1} and \eqref{eqrhs2}, we get, for $\varepsilon \in (0,\frac{1}{6})$, $K,n\ge 2\ell$ and such that $22Kn\varepsilon^2\le \frac{1}{2}$
\begin{equation*}
\mu_{p}\le (1+2\varepsilon\nu_{K,\varepsilon}(\bullet,\circ))+ C_{\delta,\ell}(n^{\delta-1}\eps^{\ell}K^{1/2}+\frac{K}{n}+\eps^2n^{\delta-1})
\end{equation*}
where $C_{\delta,\ell}$ is a constant depending only on $\ell$ and $\delta>1$.
Take now $\ell=3$, $n=\varepsilon^{-3/2}$, $K=\varepsilon^{-1/4}$ and $\delta=3/2$. We get
\begin{equation*}
\mu_{p}\le 1+2\varepsilon\nu_{\varepsilon^{-1/4},\varepsilon}(\bullet,\circ)+C\varepsilon^{5/4}.
\end{equation*}
We conclude by using Proposition \ref{Prop:UnQuart}.

\bibliographystyle{imsart-nameyear}

\end{document}